\theoremstyle{plain}
\numberwithin{equation}{section}
\newtheorem{thm}{Theorem}[section]
\newtheorem{theorem}[thm]{Theorem}
\newtheorem{corollary}[thm]{Corollary}
\newtheorem{lemma}[thm]{Lemma}
\newtheorem{definition}[thm]{Definition}
\newtheorem{proposition}[thm]{Proposition}
\newtheorem*{theorem*}{Theorem}
\begin{document}
\setcounter{page}{1}
\title[Extensions of $3$-Lie superalgebras]{Cohomology, superderivations, and abelian extensions of $3$-Lie superalgebras}

\author[Nandi]{Nupur Nandi}
\address{National Institute of Technology Rourkela \\
	Odisha-769008 \\
	India}
\email{nupurnandi999@gmail.com}

\author[Padhan]{Rudra Narayan Padhan}
\address{Centre for Data Science, Institute of Technical Education and Research  \\
	Siksha `O' Anusandhan (A Deemed to be University)\\
	Bhubaneswar-751030 \\
	Odisha, India}
\email{rudra.padhan6@gmail.com, rudranarayanpadhan@soa.ac.in}

\subjclass[2010]{17A40; 17B05; 17B10; 17B56.}
\keywords{3-Lie superalgebras; abelian extensions; cohomology; superderivations.}

\maketitle
\begin{abstract}
	The main object of study of this paper is the notion of 3-Lie superalgebras with superderivations. We consider a representation $(\Phi,\mathcal{P})$ of a $3$-Lie superalgebra $\mathcal{Q}$ on $\mathcal{P}$  and construct first-order cohomologies by using superderivations of $\mathcal{P},\mathcal{Q}$ which induces a Lie superalgebra $\mathcal{T}_{\Phi}$ and its representation $\Psi$. Then we consider abelian extensions of $3$-Lie superalgebras of the form $0\rightarrow \mathcal{P}\hookrightarrow \mathcal{L}\rightarrow \mathcal{Q}\rightarrow 0$ with $[\mathcal{P},\mathcal{P},\mathcal{L}]=0$ and construct an obstruction class to extensibility of a compatible pair of superderivation. Moreover we prove that a pair of superderivation is extensible if and only if its obstruction class is trivial under some suitable conditions.
\end{abstract}

\section{Introduction}
 Filippov introduced $n$-Lie algebras in 1985 \cite{filippov1985}. $n$-Lie algebras, in particular $3$-Lie algebras are important in mathematical physics. Lie superalgebras are the $\mathbb{Z}_2$-graded Lie algebras which was introduced by Kac \cite{kac1977}. These are too interesting from a purely mathematical point of view. The notion of 3-Lie superalgebras are generalization of 3-Lie algebras extending to a $\mathbb{Z}_2$-graded case. $n$-Lie superalgebras are more general structures including $n$-Lie algebras and Lie superalgebras whose definition was introduced by Cantarini $et ~al.$ \cite{cantarini2010}.
  \par Derivation algebra is an important topic in Lie algebras which has widespread applications in physics and geometry.  A superderivation of a Lie superalgebra is certain generalization of derivation of a Lie algebra. The structure of superderivation of Lie superalgebras was studied in \cite{nandi2021,wang2016}. Cohomology is an important tool in modern mathematics and theoretical physics, its range of applications contain algebra and topology as well as the theory of smooth manifolds or holomorphic functions. The cohomology of Lie algebras was defined by Chevalley $et~ al.$ \cite{chevalley1948}. Leites introduced cohomology of Lie superalgebras and extended some of the basic structures and results of classical theories to Lie superalgebras \cite{leites1975}. Further cohomology for $n$-Lie superalgebras was discussed in \cite{ma2014}. 
  \par Recently Tang $et ~al.$ studied a Lie algebra with a derivation from the cohomological point of view and construct a cohomology theory that controls, among other things, simultaneous deformations of a Lie algebra with a derivation \cite{tang2019}. These results have been extended to associative algebras \cite{das2020}, Leibniz algebras \cite{das2021}, $3$-Lie colour algebras \cite{zhang2014}, 3-Lie algebras \cite{xu2018}, Lie triple systems \cite{wu2022}, and $n$-Lie algebras \cite{sun2021}. Generalized representations of 3-Lie algebras and 3-Lie superalgebras was introduced in \cite{zhu2017,zhu2020}. Zhao $et~ al.$ studied a representation of a Lie superalgebra with a superderivation pair and its corresponding cohomologies \cite{zhao2021}.
\par The aim of this paper is to generalize the results of Xu \cite{xu2018} to $3$-Lie superalgebra case. First we take a representation $(\Phi,\mathcal{P})$ of a $3$-Lie superalgebra $\mathcal{Q}$ on $\mathcal{P}$ and construct $2$-cocycles by using superderivations of $\mathcal{P},\mathcal{Q}$ and hence first-order cohomologies. This construction develops a Lie superalgebra $\mathcal{T}_{\Phi}$ by the representation $\Phi$ and the space $\mathbb{H}^1(\mathcal{Q};\mathcal{P})$ of first-order cohomology class gives a representation $\Psi$ of the Lie superalgebra $\mathcal{T}_{\Phi}$. Further we consider representation of $3$-Lie superalgebras given by abelian extensions of $3$-Lie superalgebras of the form $0\rightarrow \mathcal{P}\hookrightarrow \mathcal{L}\rightarrow \mathcal{Q}\rightarrow 0$ with $[\mathcal{P},\mathcal{P},\mathcal{L}]=0$ and construct an obstruction class to extensibility of a compatible pair of superderivations of $\mathcal{P},\mathcal{Q}$ to those of $\mathcal{L}$.

\section{Preliminaries}
In this section, we recall representations and cohomologies of $3$-Lie superalgebras and their relations to abelian extensions of $3$-Lie superalgebras of the form $0\rightarrow \mathcal{P}\hookrightarrow \mathcal{L}\xrightarrow{\pi} \mathcal{Q}\rightarrow 0$ with $[\mathcal{P},\mathcal{P},\mathcal{L}]=0$. We show that $\mathbb{H}^1(\mathcal{Q};\mathcal{P})=0$ for such extensions implies split property. 
\par Let $\mathbb{Z}_2 =\{\overline{0},\overline{1}\}$ be the field of two elements. Throughout the paper, we denote $\mathbb{F}$ as a field of characteristic zero. A superspace is a $\mathbb{Z}_2$-graded vector space $\mathcal{V}=\mathcal{V}_{\overline{0}}\oplus \mathcal{V}_{\overline{1}}$. A $sub  superspace$ is a $\mathbb{Z}_2$-graded vector space which is closed under bracket operation. Non-zero elements of $\mathcal{V}_{\overline{0}}\cup \mathcal{V}_{\overline{1}}$ are said to be $homogeneous$ and whenever the degree function occurs in a formula, the corresponding elements are supposed to be homogeneous.
A $superalgebra$ is a superspace $\mathcal{G}=\mathcal{G}_{\overline{0}}\oplus \mathcal{G}_{\overline{1}}$ endowed with an algebra structure such that $\mathcal{G}_\alpha \mathcal{G}_\beta \subseteq \mathcal{G}_{\alpha + \beta}$ for $\alpha , \beta \in \mathbb{Z}_2$.  
\begin{definition}\label{d21}
	A 3-$Lie ~superalgebra$ is a $\mathbb{Z}_2$-graded vector space $\mathcal{G}=\mathcal{G}_{\overline{0}}\oplus \mathcal{G}_{\overline{1}}$ equipped with a trilinear map $[.,.,.]:\wedge^3\mathcal{G}\rightarrow \mathcal{G}$ satisfying
	\begin{enumerate}
		\item $|[x_1,x_2,x_3]|=|x_1|+|x_2|+|x_3|$,
		\item  $[x_1,x_2,x_3]=-(-1)^{|x_1||x_2|}[x_2,x_1,x_3]=-(-1)^{|x_2||x_3|}[x_1,x_3,x_2]$,
		\item $[x_1,x_2,[x_3,x_4,x_5]]=[[x_1,x_2,x_3],x_4,x_5]+(-1)^{|x_3|(x_1|+|x_2|)}[x_3,[x_1,x_2,x_4],x_5]\\
		+(-1)^{(|x_1|+|x_2|)(|x_3|+|x_4|)}[x_3,x_4,[x_1,x_2,x_5]]$,
	\end{enumerate}
for $x_1,x_2,x_3,x_4,x_5\in \mathcal{G} $ and $|x_i|$ is the degree of homogeneous element $x_i$.
\end{definition}
\par A subsuperspace $\mathcal{N}$ of a $3$-Lie superalgebra $\mathcal{G}$ is said to be a $Lie~ subsuperalgebra$ if it is closed under the superbracket. If $\mathcal{G}$ and $\mathcal{M}$ are 3-Lie superalgebras then a $3$-$Lie~ superalgebra$ $homomorphism$ $\theta:\mathcal{G}\rightarrow \mathcal{M}$ is an even linear map satisfying $\theta([x,y,z])=[\theta(x),\theta(y),\theta(z)]$ for $x,y,z\in \mathcal{G}$.

\begin{definition}\label{d22}
	A superderivation of a 3-$Lie ~superalgebra$ $\mathcal{G}$ is a linear map $\mathcal{D}:\mathcal{G}\rightarrow \mathcal{G}$ of degree $\beta$ satisfying:
	$$\mathcal{D}([x,y,z])=[\mathcal{D}(x),y,z]+(-1)^{|\beta||x|}[x,\mathcal{D}(y),z]+(-1)^{|\beta|(|x|+|y|)}[x,y,\mathcal{D}(z)],$$ for $x,y,z\in \mathcal{G}.$
	\end{definition}
\noindent We denote $Der(\mathcal{G})$ as the space of superderivations of $\mathcal{G}$. Define an even skew-supersymmetric bilinear map $ad:\wedge^2 \mathcal{G}\rightarrow gl(\mathcal{G})$ by $$ad(x_1,x_2)x_3=[x_1,x_2,x_3],$$ for $x_1,x_2,x_3\in \mathcal{G}$.

\begin{definition}\label{d23}
A representation of a $3$-Lie superalgebra $(\mathcal{G},[.,.,.])$ on a superspace $\mathcal{V}$ is a bilinear map $\Phi:\wedge ^2 \mathcal{G}\rightarrow gl(\mathcal{V})$ such that the following equalities hold:
\begin{enumerate}
	\item $|\Phi(x_1,x_2)|=|x_1|+|x_2|$,
	\item $\Phi(x_1,x_2)=-(-1)^{|x_1||x_2|}\Phi(x_2,x_1)$,
	\item $\Phi(x_1,x_2) \Phi(x_3,x_4)=\Phi([x_1,x_2,x_3],x_4)+(-1)^{|x_3|(|x_1|+|x_2|)}\Phi(x_3,[x_1,x_2,x_4])\\
	+(-1)^{(|x_1|+|x_2|)(|x_3|+|x_4|)}\Phi(x_3,x_4)\Phi(x_1,x_2)$,
	\item $\Phi(x_1,[x_2,x_3,x_4])=(-1)^{(|x_1|+|x_2|)(|x_3|+|x_4|)}\Phi(x_3,x_4)\Phi(x_1,x_2)\\-(-1)^{|x_1|(|x_2|+|x_4|)+|x_3||x_4|}\Phi(x_2,x_4)\Phi(x_1,x_3)+(-1)^{|x_1|(|x_2|+|x_3|)}\Phi(x_2,x_3)\Phi(x_1,x_4)$,
\end{enumerate}	for $x_1,x_2,x_3,x_4\in \mathcal{G}.$
\end{definition}
\noindent We denote a representation of $\mathcal{G}$ on a superspace $\mathcal{V}$ by $(\Phi;\mathcal{V})$.
\par Now onwards we always assume that $(\mathcal{G},[.,.,.])$ is a $3$-Lie superalgebra and we shall write, for any $X=x_1\wedge x_2 \in \wedge ^2 \mathcal{G}$, $x_3\in \mathcal{G}$,
\begin{equation}\label{e21}
	[X,x_3]:=[x_1,x_2,x_3]\in \mathcal{G}.
\end{equation}
We shall use the following bilinear operation $[.,.,.]_\mathbb{F}$ on $\wedge ^2 \mathcal{G}$ given by 
\begin{equation}\label{e22}
	[X,Y]_\mathbb{F}=[X,y_1]\wedge y_2 +(-1)^{|y_1||X|} y _1 \wedge[X,y_2] \in \mathcal{G},
\end{equation}
for $X=x_1\wedge x_2,~Y=y_1\wedge y_2$, and $|X|=|x_1|+|x_2|$. One can see that $\wedge ^2 \mathcal{G}$ is a Leibniz superalgebra with respect to $[.,.]_\mathbb{F}$.
\par Let $(\Phi;\mathcal{V})$ be a representation of $\mathcal{G}$. Cohomology groups of $\mathcal{G}$ with coefficients in $\mathcal{V}$ are defined as in \cite{ma2014}. At first, the space $C^{p-1}(\mathcal{G};\mathcal{V})$ of $p$-cochains is the set of multilinear maps of the form 
\begin{equation}\label{e23}
	f:\underbrace{\wedge ^2\mathcal{G} \otimes \wedge ^2\mathcal{G}\otimes \dots \otimes \wedge ^2 \mathcal{G}}_{p-1} \otimes \mathcal{G}\rightarrow \mathcal{V},
\end{equation}
while the coboundary operator $\delta_{\Phi}:C^{p-1}(\mathcal{G};\mathcal{V})\rightarrow C^{p}(\mathcal{G};\mathcal{V})$ is given by
\begin{equation}\label{e24}
	\begin{split}
		&(\delta_{\Phi}f)(X_1,X_2,\dots,X_p,z)\\
		&=\sum_{1\leq j<k\leq p}(-1)^j (-1)^{|X_j|(|X_{j+1}|+\dots+|X_{k-1}|)}f(X_1,\dots,\hat{X_j},\dots,X_{k-1},[x_j^1,x_j^2,x_k^1]\wedge x_k^2,\\&~~~~~~X_{k+1},\dots,X_p,x)+\sum_{1\leq j<k\leq p}(-1)^j (-1)^{|X_j|(|X_{j+1}|+\dots+|X_{k-1}|)+|x_k^1||X_j|}f(X_1,\dots,\hat{X_j},\dots,X_{k-1},\\&~~~~~~x_k^1\wedge[x_j^1,x_j^2,x_k^2],X_{k+1},\dots,X_p,x)+\sum_{j=1}^{p}(-1)^j(-1)^{|X_j|(|X_{j+1}|+\dots+|X_p|)}f(X_1,\dots,\hat{X_j},\dots,\\& X_p, [X_j,x])+\sum_{j=1}^{p}(-1)^{j+1}(-1)^{|X_j|(|f|+|X_1|+\dots+|X_{j-1}|)}\Phi(X_j)f(X_1,\dots,\hat{X_j},\dots, X_p, x)\\
		&+(-1)^{p+1}(-1)^{(|x_p^2|+|x|)(|f|+|X_1|+\dots+|X_{p-1}|+|x_p^1|)}\Phi(x_p^2,x)f(X_1,\dots, X_{p-1},x_p^1)\\
		&+(-1)^{p+1}(-1)^{(|x_p^1|+|x|)(|f|+|X_1|+\dots+|X_{p-1}|)+|X_p||x|}\Phi(x,x_p^1)f(X_1,\dots, X_{p-1},x_p^2),
	\end{split}
\end{equation}
for $X_i=x_i\wedge y_i\in \wedge ^2\mathcal{G}$ and $z\in \mathcal{G}$. The $p$$^{th}$ cohomology group is $\mathbb{H}^p(\mathcal{G};\mathcal{V})=\mathbb{Z}^p(\mathcal{G};\mathcal{V})/\mathbb{B}^p(\mathcal{G};\mathcal{V}),$ where $\mathbb{Z}^p(\mathcal{G};\mathcal{V})$ (respectively, $\mathbb{B}^p(\mathcal{G};\mathcal{V}))$ is the space of $(p+1)$-cocycles(respectively, $(p+1)$-coboundaries). We denote $(p+1)$-cocycles of even degree as $(\mathbb{Z}^p(\mathcal{G};\mathcal{V}))_{\overline{0}}$.\\
\noindent By using Eq \ref{e24}, for $f \in \mathbb{C}^0(\mathcal{G};\mathcal{V})$, $X_1=x_1\wedge x_2\in \wedge ^2 \mathcal{G}$, and $x_3\in \mathcal{G}$, we have 
\begin{equation}\label{e25}
	\begin{split}
		&(\delta_{\Phi} f)(X_1,x_3)=-f([X_1,x_3])+(-1)^{|f|(|x_1|+|x_2|)}\Phi(X_1)f(x_3)\\
		&+(-1)^{(|f|+|x_1|)(|x_2|+|x_3|)}\Phi(x_2,x_3)f(x_1)+(-1)^{|f|(|x_1|+|x_3|)+|x_3|(|x_1|+|x_2|)}\Phi(x_3,x_1)f(x_2)\\
		&=-f([x_1,x_2,x_3])+(-1)^{|f|(|x_1|+|x_2|)}\Phi(x_1,x_2)f(x_3)\\
		&+(-1)^{(|f|+|x_1|)(|x_2|+|x_3|)}\Phi(x_2,x_3)f(x_1)+(-1)^{|f|(|x_1|+|x_3|)+|x_3|(|x_1|+|x_2|)}\Phi(x_3,x_1)f(x_2),
	\end{split}
\end{equation}
and for $f \in \mathbb{C}^1(\mathcal{G};\mathcal{V})$, $X_1=x_1\wedge x_2,~X_2=x_3\wedge x_4$, and $x_5\in \mathcal{G}$,
\begin{equation}\label{e26}
	\begin{split}
	&(\delta_{\Phi} f)(X_1,X_2,x_5)=-f([X_1,X_2]_\mathbb{F},x_5)-(-1)^{(|x_1|+|x_2|)(|x_3|+|x_4|)}f(X_2,[X_1,x_5])+f(X_1,[X_2,x_5])\\
	&+(-1)^{|f|(|x_1|+|x_2|)}\Phi(X_1)f(X_2,x_5)-(-1)^{(|f|+|x_1|+|x_2|)(|x_3|+|x_4|)}\Phi(X_2)f(X_1,x_5)\\
	&-(-1)^{(|f|+|x_1|+|x_2|+|x_3|)(|x_4|+|x_5|)}\Phi(x_4,x_5)f(X_1,x_3)\\
	&-(-1)^{(|f|+|x_1|+|x_2|)(|x_3|+|x_5|)+|x_5|(|x_3|+|x_4|)}\Phi(x_5,x_3)f(X_1,x_4)\\
	&=-f([x_1,x_2,x_3],x_4,x_5)-(-1)^{|x_3|(|x_1|+|x_2|)}f(x_3,[x_1,x_2,x_4],x_5)	\\
	&-(-1)^{(|x_1|+|x_2|)(|x_3|+|x_4|)}f(x_3,x_4,[x_1,x_2,x_5])+f(x_1,x_2,[x_3,x_4,x_5])\\
	&+(-1)^{|f|(|x_1|+|x_2|)}\Phi(x_1,x_2)f(x_3,x_4,x_5)-(-1)^{(|f|+|x_1|+|x_2|)(|x_3|+|x_4|)}\Phi(x_3,x_4)f(x_1,x_2,x_5)\\
	&-(-1)^{(|f|+|x_1|+|x_2|+|x_3|)(|x_4|+|x_5|)}\Phi(x_4,x_5)f(x_1,x_2,x_3)\\
	&-(-1)^{(|f|+|x_1|+|x_2|)(|x_3|+|x_5|)+|x_5|(|x_3|+|x_4|)}\Phi(x_5,x_3)f(x_1,x_2,x_4),
	\end{split}
\end{equation}
where $[.,.,.]_\mathbb{F}$ is given by Eq \ref{e22}.

\par Suppose that $\mathcal{L}$ and $\mathcal{P}$ are 3-Lie superalgebras. If $0\rightarrow \mathcal{P}\hookrightarrow \mathcal{L}\xrightarrow{\pi}\mathcal{Q}\rightarrow 0$ is an exact sequence of 3-Lie superalgebras and $[\mathcal{P},\mathcal{P},\mathcal{L}]=0$ then we call $\mathcal{L}$ an abelian extension of $\mathcal{L}$ by $\mathcal{P}$. An even linear map $s:\mathcal{Q}\rightarrow \mathcal{L}$ is called a section if  it satisfies $\pi s=1$. If there exists a section which is also a homomorphism between 3-Lie superalgebras, we say that the abelian extension splits. Now we construct a representation of $\mathcal{Q}$ on $\mathcal{P}$ and a cohomology class. Fix any section $s:\mathcal{Q}\rightarrow \mathcal{L}$ of $\pi$ and define $\Phi:\wedge^2 \mathcal{Q}\rightarrow End(\mathcal{P})$ by 
\begin{equation}\label{e27}
	\Phi(x,y)(v)=[s(x),s(y),v]_\mathcal{L},
\end{equation}
for $x,y \in \mathcal{Q}$ and $v\in \mathcal{P}$. It is easy to check that $\Phi$ is independent of the choice of $s$. Moreover, since
\begin{equation}\label{e28}
	[s(x),s(y),s(z)]_\mathcal{L}-s([x,y,z]_\mathcal{Q})\in \mathcal{P},
\end{equation}
for $x,y,z\in \mathcal{Q}$,
we have a map $\Omega:\wedge^3 \mathcal{Q}\rightarrow End(\mathcal{P})$ given by 
\begin{equation}\label{e29}
\Omega(x,y,z)=[s(x),s(y),s(z)]_\mathcal{L}-s([x,y,z]_\mathcal{Q})\in \mathcal{P},
\end{equation}
for $x,y,z\in \mathcal{Q}$.

\begin{lemma}\label{l1}
	Let $0\rightarrow \mathcal{P}\hookrightarrow \mathcal{L}\xrightarrow{\pi} \mathcal{Q}\rightarrow 0$ be an extension of $3$-Lie superalgebras with $[\mathcal{P},\mathcal{P},\mathcal{L}]=0$. Then
	\begin{enumerate}
		\item $\Phi$ given by Eq \ref{e27} is a representation of $\mathcal{Q}$ on $\mathcal{P}$.
		\item $\Omega$ given by Eq \ref{e29} is a 2-cocycle associated to $(\Phi;\mathcal{P})$.
	\end{enumerate}
\end{lemma}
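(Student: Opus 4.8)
The plan is to derive both claims directly from the axioms of the $3$-Lie superalgebra $\mathcal{L}$ (Definition \ref{d21}), using throughout the vanishing hypothesis $[\mathcal{P},\mathcal{P},\mathcal{L}]=0$ and the decomposition $[s(x),s(y),s(z)]_\mathcal{L}=s([x,y,z]_\mathcal{Q})+\Omega(x,y,z)$ supplied by Eq \ref{e29}. Note first that $\mathcal{P}=\ker\pi$ is an ideal of $\mathcal{L}$, so $\Phi(x,y)(v)=[s(x),s(y),v]_\mathcal{L}\in\mathcal{P}$ for $v\in\mathcal{P}$; in particular every composite of $\Phi$-operators appearing below is well defined on $\mathcal{P}$.

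For part (1), I would check the four conditions of Definition \ref{d23} in turn. Since $s$ is even and the bracket of $\mathcal{L}$ preserves degree, $\Phi(x_1,x_2)(v)$ has degree $|x_1|+|x_2|+|v|$, which is condition (1), and condition (2) is immediate from the skew-supersymmetry of the bracket in its first two slots. For condition (3) I would apply both sides to an arbitrary $v\in\mathcal{P}$, rewrite $\Phi(x_1,x_2)\Phi(x_3,x_4)(v)=[s(x_1),s(x_2),[s(x_3),s(x_4),v]_\mathcal{L}]_\mathcal{L}$, and expand the right side by the fundamental identity (Definition \ref{d21}(3)) of $\mathcal{L}$. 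In the two terms where an inner bracket $[s(x_i),s(x_j),s(x_k)]_\mathcal{L}$ occurs I substitute $s([x_i,x_j,x_k]_\mathcal{Q})+\Omega(\ldots)$; the $\Omega$-correction then sits in a bracket alongside $v\in\mathcal{P}$, hence involves two $\mathcal{P}$-entries and dies by $[\mathcal{P},\mathcal{P},\mathcal{L}]=0$. What remains is exactly $\Phi([x_1,x_2,x_3],x_4)(v)$ together with the two further terms of condition (3). Condition (4) is treated in the same way, starting from $\Phi(x_1,[x_2,x_3,x_4])(v)=[s(x_1),s([x_2,x_3,x_4]_\mathcal{Q}),v]_\mathcal{L}$, replacing $s([x_2,x_3,x_4]_\mathcal{Q})$ by $[s(x_2),s(x_3),s(x_4)]_\mathcal{L}-\Omega(x_2,x_3,x_4)$ (whose $\Omega$-term again vanishes against $v\in\mathcal{P}$), and then applying the fundamental identity of $\mathcal{L}$ to the resulting double bracket.

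For part (2), I would verify $\delta_\Phi\Omega=0$ by testing the explicit formula Eq \ref{e26} with $f=\Omega$. The cleanest route is to write the fundamental identity of $\mathcal{L}$ for the five elements $s(x_1),\dots,s(x_5)$ and expand every bracket via $[s(x_a),s(x_b),s(x_c)]_\mathcal{L}=s([x_a,x_b,x_c]_\mathcal{Q})+\Omega(x_a,x_b,x_c)$. A bracket $[s(x_a),s(x_b),\Omega(\ldots)]_\mathcal{L}$ with a single $\mathcal{P}$-entry becomes a $\Phi$-action on $\Omega(\ldots)$ by Eq \ref{e27} after reordering by skew-supersymmetry, whereas any bracket with two $\mathcal{P}$-entries drops out by $[\mathcal{P},\mathcal{P},\mathcal{L}]=0$. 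The pure-$s$ contributions then reassemble into $s$ applied to the fundamental identity of $\mathcal{Q}$ and so cancel identically, leaving precisely the $\Omega$- and $\Phi$-terms forming the right side of Eq \ref{e26}, which therefore vanishes.

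I expect the main obstacle to be not conceptual but the careful tracking of the Koszul signs: one must check that each reordering by Definition \ref{d21}(2) yields exactly the sign prescribed in Definition \ref{d23} and in Eq \ref{e26}. I would manage this by fixing homogeneous elements once and for all and matching the exponent of each $(-1)$ factor term by term against the target identities, so that the super-signs fall into place once the underlying $3$-Lie-algebra bookkeeping is in order.
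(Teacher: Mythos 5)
Your proposal is correct and takes essentially the same route as the paper's own proof: both expand the fundamental identity of $\mathcal{L}$ on section elements, substitute $[s(x),s(y),s(z)]_\mathcal{L}=s([x,y,z]_\mathcal{Q})+\Omega(x,y,z)$, discard every bracket with two $\mathcal{P}$-entries via $[\mathcal{P},\mathcal{P},\mathcal{L}]=0$, and read off the surviving single-$\mathcal{P}$-entry brackets as $\Phi$-actions, with the pure-$s$ terms cancelling through the fundamental identity of $\mathcal{Q}$. If anything you are slightly more thorough than the paper, which only writes out condition (4) of Definition \ref{d23} explicitly in part (1).
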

\begin{proof}
	\begin{enumerate}
		\item By the equality
		\begin{equation}
			\begin{split}
				&[s(x_1),u,[s(y_1),s(y_2),s(y_3)]_\mathcal{L}]_\mathcal{L}=[[s(x_1),u,s(y_1)]_\mathcal{L},s(y_2),s(y_3)]_\mathcal{L}\\
				&~~~~~~+(-1)^{|y_1|(|x_1|+|u|)}[s(y_1),[s(x_1),u,s(y_2)]_\mathcal{L},s(y_3)]_\mathcal{L}\\&~~~~~~+(-1)^{(|x_1|+|u|)(|y_1|+|y_2|)}[s(y_1),s(y_2),[s(x_1),u,s(y_3)]_\mathcal{L}]_\mathcal{L},
			\end{split}
		\end{equation}
we have	\begin{equation*}
		\begin{split}
			&[s(x_1),u,[s(y_1),s(y_2),s(y_3)]_\mathcal{L}]_\mathcal{L}=[[s(x_1),u,s(y_1)]_\mathcal{L},s(y_2),s(y_3)]_\mathcal{L}\\
			&~~~~~~+(-1)^{|y_1|(|x_1|+|u|)}[s(y_1),[s(x_1),u,s(y_2)]_\mathcal{L},s(y_3)]_\mathcal{L}\\&~~~~~~+(-1)^{(|x_1|+|u|)(|y_1|+|y_2|)}[s(y_1),s(y_2),[s(x_1),u,s(y_3)]_\mathcal{L}]_\mathcal{L}\\
			&\implies \Phi(x_1,[y_1,y_2,y_3]_\mathcal{Q})=[\Phi(x_1,y_1)(u),s(y_2),s(y_3)]_\mathcal{L}\\&~~~~~~+(-1)^{|y_1|(|x_1|+|u|)}[s(y_1),\Phi(x_1,y_2)(u),s(y_3)\\&~~~~~~+(-1)^{(|y_1|+|y_2|)(|x_1|+|u|)}[s(y_1),s(y_2),\Phi(x_1,y_3)(u)]_\mathcal{L}\\
			&\implies \Phi(x_1,[y_1,y_2,y_3]_\mathcal{Q})=(-1)^{(|x_1|+|y_1|)(|y_2|+|y_3|)}\Phi(y_2,y_3)\Phi(x_1,y_1)\\&~~~~~-(-1)^{|x_1|(|y_1|+|y_3|)+|y_2| |y_3|}\Phi(y_1,y_3)\Phi(x_1,y_2)  +(-1)^{|x_1|(|y_1|+|y_2|))}\Phi(y_1,y_2)\Phi(x_1,y_3).
		\end{split}
	\end{equation*}
Therefore $\Phi$ is a representation of $\mathcal{Q}$ on $\mathcal{P}$. 
\item By the equality
\begin{equation}
	\begin{split}
		&[s(x_1),s(x_2),[s(y_1),s(y_2),s(y_3)]_\mathcal{L}]_\mathcal{L}=[[s(x_1),s(x_2),s(y_1)]_\mathcal{L},s(y_2),s(y_3)]_\mathcal{L}\\
		&~~~~~~+(-1)^{|y_1|(|x_1|+|x_2|)}[s(y_1),[s(x_1),s(x_2),s(y_2)]_\mathcal{L},s(y_3)]_\mathcal{L}\\&~~~~~~+(-1)^{(|x_1|+|x_2|)(|y_1|+|y_2|)}[s(y_1),s(y_2),[s(x_1),s(x_2),s(y_3)]_\mathcal{L}]_\mathcal{L},
	\end{split}
\end{equation}
\noindent we have
\begin{equation*}
	\begin{split}
			&[s(x_1),s(x_2),\Omega(y_1,y_2,y_3)]_\mathcal{L}+[s(x_1),s(x_2),s([y_1,y_2,y_3]_\mathcal{Q})]_\mathcal{L}\\
		&~~~~~~=[\Omega (x_1,x_2,y_1),s(y_2),s(y_3)]_\mathcal{L}+[s([x_1,x_2,y_1]_\mathcal{Q}),s(y_2),s(y_3)]_\mathcal{L}\\
		&~~~~~~+(-1)^{|y_1|(|x_1|+|x_2|)}[s(y_1),\Omega (x_1,x_2,y_2),s(y_3)]_\mathcal{L}\\&~~~~~~+(-1)^{|y_1|(|x_1|+|x_2|)}[s(y_1),s([x_1,x_2,y_2]_\mathcal{Q}),s(y_3)]\\
		&~~~~~~+(-1)^{(|x_1|+|x_2|)(|y_1|+|y_2|)}[s(y_1),s(y_2),\Omega (x_1,x_2,y_3)]_\mathcal{L}\\&~~~~~~+(-1)^{(|x_1|+|x_2|)(|y_1|+|y_2|)}[s(y_1),s(y_2),s([x_1,x_2,y_3]_\mathcal{Q})]_\mathcal{L}\\
		&\implies \Phi(x_1,x_2) (\Omega(y_1,y_2,y_3))+\Omega(x_1,x_2,[y_1,y_2,y_3]_\mathcal{Q})+s([x_1,x_2,[y_1,y_2,y_3]_\mathcal{Q}]_\mathcal{Q})\\
		&~~~~~~=\Phi(y_2,y_3) (\Omega(x_1,x_2,y_1))+\Omega([x_1,x_2,y_1]_\mathcal{Q},y_2,y_3)+s([x_1,x_2,y_1]_\mathcal{Q},y_2,y_3)\\
		&~~~~~~+(-1)^{|y_1|(|x_1|+|x_2|)}(\Phi(y_3,y_1) (\Omega(x_1,x_2,y_2))+(-1)^{|y_1|(|x_1|+|x_2|)}\Omega(y_1,[x_1,x_2,y_2]_\mathcal{Q},y_3)\\&~~~~~+(-1)^{ |y_1|(|x_1|+|x_2|)}s(y_1,[x_1,x_2,y_2]_\mathcal{Q},y_3))+(-1)^{ (|x_1|+|x_2|)(|y_1|+|y_2|)}(\Phi(y_1,y_2) (\Omega(x_1,x_2,y_3))\\&~~~~~~+(-1)^{ (|x_1|+|x_2|)(|y_1|+|y_2|)}\Omega(y_1,y_2,[x_1,x_2,y_3]_\mathcal{Q})+(-1)^{ (|x_1|+|x_2|)(|y_1|+|y_2|)}s[y_1,y_2,[x_1,x_2,y_3]_\mathcal{Q}]_\mathcal{Q})\\
		&\implies \Phi(x_1,x_2) (\Omega(y_1,y_2,y_3))+\Omega(x_1,x_2,[y_1,y_2,y_3]_\mathcal{Q})=\Phi(y_2,y_3) (\Omega(x_1,x_2,y_1))\\&~~~~~~+\Omega([x_1,x_2,y_1]_\mathcal{Q},y_2,y_3)+(-1)^{|y_1|(|x_1|+|x_2|)}(\Phi(y_3,y_1) (\Omega(x_1,x_2,y_2))\\&~~~~~~~+(-1)^{ |y_1|(|x_1|+|x_2|)}\Omega(y_1,[x_1,x_2,y_2]_\mathcal{Q},y_3)+(-1)^{ (|x_1|+|x_2|)(|y_1|+|y_2|)}(\Phi(y_1,y_2) (\Omega(x_1,x_2,y_3))\\&~~~~~~~+(-1)^{ (|x_1|+|x_2|)(|y_1|+|y_2|)}\Omega(y_1,y_2,[x_1,x_2,y_3]_\mathcal{Q}).
	\end{split}
\end{equation*}
\noindent Hence $\Omega$ is a 2-cocycle associated to $(\Phi;\mathcal{P})$.
	\end{enumerate}
\end{proof}

\begin{corollary}\label{coro21}
Let	$0\rightarrow \mathcal{P}\hookrightarrow \mathcal{L}\xrightarrow{\pi} \mathcal{Q}\rightarrow 0$ be an extension of $3$-Lie superalgebras with $[\mathcal{P},\mathcal{P},\mathcal{L}]=0$. Then the cohomology class $[\Omega]$ does not depend on the choice of $s$.
\end{corollary}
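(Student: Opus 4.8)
The plan is to show that two different sections produce cocycles that differ by a coboundary, whence they determine the same class in $\mathbb{H}^1(\mathcal{Q};\mathcal{P})$. Let $s$ and $s'$ be two even sections of $\pi$, and let $\Omega,\Omega'$ be the associated $2$-cocycles given by Eq \ref{e29}. Since $\pi s = \pi s' = 1$, we have $\pi(s(x)-s'(x)) = 0$ for every $x\in\mathcal{Q}$, so $s(x)-s'(x)\in\ker\pi=\mathcal{P}$. This defines an even linear map $\lambda:\mathcal{Q}\to\mathcal{P}$, $\lambda(x)=s(x)-s'(x)$, that is, a $0$-cochain in $C^0(\mathcal{Q};\mathcal{P})$ of degree $\overline{0}$. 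Because $\Phi$ of Eq \ref{e27} does not depend on the chosen section (as noted after Eq \ref{e27}), both $\Omega$ and $\Omega'$ are cocycles for the same representation $(\Phi;\mathcal{P})$, so comparing their classes is meaningful. The goal is to establish the identity $\Omega-\Omega'=\delta_{\Phi}\lambda$.

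First I would substitute $s=s'+\lambda$ into $[s(x),s(y),s(z)]_{\mathcal{L}}$ and expand by trilinearity into the eight terms obtained by letting each slot contribute either $s'$ or $\lambda$. Since $\lambda$ is $\mathcal{P}$-valued and $[\mathcal{P},\mathcal{P},\mathcal{L}]=0$, every term containing two or more factors of $\lambda$ vanishes. What remains is the all-$s'$ term $[s'(x),s'(y),s'(z)]_{\mathcal{L}}$, which cancels against the corresponding term of $\Omega'$, together with the three single-$\lambda$ terms $[\lambda(x),s'(y),s'(z)]_{\mathcal{L}}$, $[s'(x),\lambda(y),s'(z)]_{\mathcal{L}}$, and $[s'(x),s'(y),\lambda(z)]_{\mathcal{L}}$. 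Combining this with $\Omega-\Omega'=\big([s(x),s(y),s(z)]_{\mathcal{L}}-[s'(x),s'(y),s'(z)]_{\mathcal{L}}\big)-\lambda([x,y,z]_{\mathcal{Q}})$ reduces the problem to identifying the three surviving brackets as actions of $\Phi$ on $\lambda$.

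Next I would transpose the $\mathcal{P}$-valued argument into the last slot of each bracket using the skew-supersymmetry of Definition \ref{d21}(2), and apply Eq \ref{e27}. For instance $[s'(x),s'(y),\lambda(z)]_{\mathcal{L}}=\Phi(x,y)\lambda(z)$ directly, while $[\lambda(x),s'(y),s'(z)]_{\mathcal{L}}=(-1)^{|x|(|y|+|z|)}\Phi(y,z)\lambda(x)$ after two transpositions, and $[s'(x),\lambda(y),s'(z)]_{\mathcal{L}}=(-1)^{|z|(|x|+|y|)}\Phi(z,x)\lambda(y)$ after one transposition and an application of the skew-symmetry of $\Phi$ from Definition \ref{d23}(2). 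Substituting these three expressions and recalling $|\lambda|=\overline{0}$, the resulting combination is precisely the right-hand side of the $0$-cochain coboundary formula Eq \ref{e25} evaluated at $(x\wedge y,\,z)$. Therefore $\Omega-\Omega'=\delta_{\Phi}\lambda\in\mathbb{B}^1(\mathcal{Q};\mathcal{P})$, so $[\Omega]=[\Omega']$, which proves the claim.

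The step I expect to be the real obstacle is the sign bookkeeping in the previous paragraph: verifying that the Koszul signs generated by moving $\lambda(x)$ and $\lambda(y)$ into the defining slot of $\Phi$, after combining with the skew-symmetry of $\Phi$, coincide exactly with the signs $(-1)^{|x|(|y|+|z|)}$ and $(-1)^{|z|(|x|+|y|)}$ appearing in Eq \ref{e25}. Once the terms with two or more factors of $\lambda$ are discarded by $[\mathcal{P},\mathcal{P},\mathcal{L}]=0$, the remainder of the argument is forced, so the entire content lies in this slot-by-slot sign verification.
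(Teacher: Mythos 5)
Your proposal is correct and follows essentially the same route as the paper: both set $\lambda(x)=s(x)-s'(x)\in\mathcal{P}$, expand the bracket, discard terms with two or more $\lambda$-entries via $[\mathcal{P},\mathcal{P},\mathcal{L}]=0$, and identify the three surviving single-$\lambda$ brackets with the terms of $(\delta_{\Phi}\lambda)(x,y,z)$ in Eq \ref{e25}. Your slot-by-slot sign verification (yielding $(-1)^{|x|(|y|+|z|)}\Phi(y,z)\lambda(x)$ and $(-1)^{|z|(|x|+|y|)}\Phi(z,x)\lambda(y)$) is accurate and in fact more carefully justified than in the paper's own computation.
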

\begin{proof}
	Let $s_1$ and $s_2$ be sections of $\pi$ and $\Omega_1,\Omega_2$ be defined by Eq \ref{e29} which are corresponding to $s_1,s_2$, respectively. For any $x\in \mathcal{Q}$, set $\lambda(x)=s_1(x)-s_2(x)$.\\
	$(\pi \lambda)(x)=x-x=0$, $\lambda(x)\in \mathcal{P}$, and  $\lambda\in (\mathbb{C}^0(\mathcal{G};\mathcal{V}))_{\overline{0}}$. Then 
	\begin{equation*}
		\begin{split}
			&\Omega_1(x,y,z)-\Omega_2(x,y,z)\\
			&=[s_1(x),s_1(y),s_1(z)]_\mathcal{L}-s_1([x,y,z]_\mathcal{Q})-[s_2(x),s_2(y),s_2(z)]_\mathcal{L}+s_2([x,y,z]_\mathcal{Q})\\
			&=[s_2(x)+\lambda(x),s_2(y)+\lambda(y),s_2(z)+\lambda(z)]_\mathcal{L}-s_2([x,y,z]_\mathcal{Q})+\lambda([x,y,z]_\mathcal{Q})\\
			&~~~~~~-[s_2(x),s_2(y),s_2(z)]_\mathcal{L}+s_2([x,y,z]_\mathcal{Q})\\
			&=[s_2(x),s_2(y),\lambda(z)]_\mathcal{L}+[\lambda(x),s_2(y),s_2(z)]_\mathcal{L}+[s_2(x),\lambda(y),s_2(z)]_\mathcal{L}-\lambda([x,y,z]_\mathcal{Q})	\\
			&=-\lambda([x,y,z]_\mathcal{Q})+\Phi(x,y)(\lambda(z))+(-1)^{|x|(|y|+|z|)}\Phi(y,z)(\lambda(x))\\
			&~~~~~~~+(-1)^{|z|(|x|+|y|)}\Phi(z,x)(\lambda(y))\\
			&=(\delta_{\Phi}\lambda)(x,y,z),
		\end{split}
	\end{equation*}
which completes the proof.
\end{proof}

\begin{proposition}
	 If $(\Phi, \mathcal{P})$ is a representation of $\mathcal{Q}$ and $\Omega$ is a $2$-cocycle given by the representation $(\Phi, \mathcal{P})$, then $\mathcal{L}_{\Phi,\Omega}:=\mathcal{Q}\oplus \mathcal{P}$ is a 3-Lie superalgebra with the superbracket given by 
	\begin{equation}\label{e210}
		\begin{split}
			[x+u,y+v,z+w]_{\mathcal{L}_{\Phi,\Omega}}=&[x,y,z]_{\mathcal{Q}}+\Omega(x,y,z)+\Phi(x,y)(w)\\&~~~~~~+(-1)^{|y||z|}\Phi(z,x)(v)+(-1)^{|x|(|y|+|z|)}\Phi(y,z)(u),
		\end{split}
	\end{equation}
where $x,y,z\in \mathcal{Q}$ and $u,v,w\in \mathcal{P}$.
\end{proposition}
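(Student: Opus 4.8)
The plan is to verify directly that the operation in Eq \ref{e210} endows $\mathcal{L}_{\Phi,\Omega}=\mathcal{Q}\oplus\mathcal{P}$ with the three properties of Definition \ref{d21}, computing the $\mathcal{Q}$-component and the $\mathcal{P}$-component of each expression separately. The grading property (1) is immediate, since $|\Phi(x,y)|=|x|+|y|$ by Definition \ref{d23}(1) and $\Omega$ is even and skew with $|\Omega(x,y,z)|=|x|+|y|+|z|$, so each summand on the right of Eq \ref{e210} is homogeneous of degree $|x|+|y|+|z|$. For the skew-supersymmetry (2), the $\mathcal{Q}$-part $[x,y,z]_\mathcal{Q}$ and the term $\Omega(x,y,z)$ are already skew-supersymmetric because $\mathcal{Q}$ is a $3$-Lie superalgebra and $\Omega$ is defined on $\wedge^3\mathcal{Q}$; for the three $\Phi$-terms I would check that transposing two arguments permutes them into one another with exactly the sign demanded by (2), which reduces to the antisymmetry $\Phi(a,b)=-(-1)^{|a||b|}\Phi(b,a)$ of Definition \ref{d23}(2) together with routine sign bookkeeping.

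The substantive step is the fundamental identity (3). Writing the five inputs as $x_i+u_i$ with $x_i\in\mathcal{Q}$ and $u_i\in\mathcal{P}$, I would expand $[x_1+u_1,x_2+u_2,[x_3+u_3,x_4+u_4,x_5+u_5]]$ together with the three terms on the right-hand side of (3) by repeated use of Eq \ref{e210}, and then compare $\mathcal{Q}$- and $\mathcal{P}$-components. The decisive simplification is the hypothesis $[\mathcal{P},\mathcal{P},\mathcal{L}]=0$, visible in Eq \ref{e210} as the absence of any summand bilinear in the $\mathcal{P}$-slots: consequently every surviving $\mathcal{P}$-valued term is linear in at most one of $u_1,\dots,u_5$, with its remaining arguments in $\mathcal{Q}$. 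The $\mathcal{Q}$-component of (3) is precisely the fundamental identity of $\mathcal{Q}$ and therefore holds automatically, and the $\mathcal{P}$-component splits into two families.

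The first family consists of the terms linear in exactly one $u_i$, which involve only a composition of two copies of $\Phi$ or a single $\Phi$ evaluated on a bracketed argument. Collecting them by the index $i$, I expect the terms linear in $u_3,u_4,u_5$ (the acted-on slots) to reproduce Definition \ref{d23}(3) and the terms linear in $u_1,u_2$ (the acting slots) to reproduce Definition \ref{d23}(4), in each case after relabeling and one use of the antisymmetry in (2). The second family consists of the terms in which all five arguments lie in $\mathcal{Q}$, so that $\Omega$ appears, as $\Phi(x_1,x_2)\Omega(\cdots)$, $\Phi(\cdots)\Omega(\cdots)$, or $\Omega(\cdots,[x,y,z]_\mathcal{Q})$; collecting these I expect to recover exactly the $2$-cocycle equation $\delta_\Phi\Omega=0$ written out in Eq \ref{e26} with $f=\Omega$, which holds by hypothesis.

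The hard part will be the sign bookkeeping rather than any conceptual difficulty: the algebraic content is entirely supplied by Definition \ref{d23}(3),(4) and by the cocycle equation Eq \ref{e26}, but one must confirm that every Koszul sign produced by expanding Eq \ref{e210} matches the corresponding sign in those identities term by term. Once both families are matched, the $\mathcal{P}$-component of (3) holds, and together with the automatic $\mathcal{Q}$-component this completes the verification.
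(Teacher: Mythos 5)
Your proposal is correct and takes essentially the same route as the paper: the paper also reduces everything to the fundamental identity, expands the four brackets under Eq \ref{e210}, and matches terms so that the pieces linear in the $\mathcal{P}$-entries are absorbed by the representation axioms (3) and (4) of Definition \ref{d23} while the $\Omega$-terms are absorbed by the cocycle equation, exactly as in your two-family decomposition (the paper even skips the grading and skew-supersymmetry checks that you outline). The only cosmetic slip is calling $[\mathcal{P},\mathcal{P},\mathcal{L}]=0$ a ``hypothesis''---it is not assumed but is automatic from the form of the defined bracket, as you yourself observe.
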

\begin{proof}
	It is sufficient to verify the fundamental identity. So, we have
	\begin{equation}\label{e211}
		\begin{split}
			&[x_1+u_1,x_2+u_2,[y_1+v_1,y_2+v_2,y_3+v_3]_{\mathcal{L}_{\Phi,\Omega}}]_{\mathcal{L}_{\Phi,\Omega}}\\
			&=[x_1,x_2,[y_1,y_2,y_3]_\mathcal{Q}]_\mathcal{Q}+\Omega(x_1,x_2,[y_1,y_2,y_3]_\mathcal{Q})+\Phi(x_1,x_2)(\Omega(y_1,y_2,y_3)\\
			&~~~~~~+\Phi(y_1,y_2)(v_3)+(-1)^{|y_2||y_3|}\Phi(y_3,y_1)(v_2)+(-1)^{|y_1|(|y_2|+|y_3|)}\Phi(y_2,y_3)(v_1))\\&~~~~~~~+(-1)^{|x_2|(|y_1|+|y_2|+|y_3|)}\Phi([y_1,y_2,y_3]_\mathcal{Q},x_1)(u_2)\\&~~~~~~+(-1)^{|x_1|(|x_2|+|y_1|+|y_2|+|y_3|)}\Phi(x_2,[y_1,y_2,y_3]_\mathcal{Q})(u_1),
		\end{split}
	\end{equation}

	\begin{equation}\label{e212}
	\begin{split}
		&[[x_1+u_1,x_2+u_2,y_1+v_1]_{\mathcal{L}_{\Phi,\Omega}},y_2+v_2,y_3+v_3]_{\mathcal{L}_{\Phi,\Omega}}\\
		&=[[x_1,x_2,y_1]_\mathcal{Q},y_2,y_3]_\mathcal{Q}+\Omega([x_1,x_2,y_1]_\mathcal{Q},y_2,y_3)\\&~~~~~~+(-1)^{(|x_1|+|x_2|+|y_1|)(|y_2|+|y_3|)}\Phi(y_2,y_3)(\Omega(x_1,x_2,y_1)\\
		&~~~~~~+\Phi(x_1,x_2)(v_1)+(-1)^{|x_2||y_1|}\Phi(y_1,x_1)(u_2)+(-1)^{|x_1|(|x_2|+|y_1|)}\Phi(x_2,y_1)(u_1))\\&~~~~~~~+\Phi([x_1,x_2,y_1]_\mathcal{Q},y_2)(v_3)+(-1)^{|y_3|(|x_1|+|x_2|+|y_2|)}\Phi(y_3,[x_1,x_2,y_1]_\mathcal{Q})(v_2),
	\end{split}
\end{equation}

\begin{equation}\label{e213}
	\begin{split}
			&(-1)^{|y_1|(|x_1|+|x_2|)}[y_1+v_1,[x_1+u_1,x_2+u_2,y_2+v_2]_{\mathcal{L}_{\Phi,\Omega}},y_3+v_3]_{\mathcal{L}_{\Phi,\Omega}}\\
		&=(-1)^{|y_1|(|x_1|+|x_2|)}[y_1,[x_1,x_2,y_2]_\mathcal{Q},y_3]_\mathcal{Q}+(-1)^{|y_1|(|x_1|+|x_2|)}\Omega(y_1,[x_1,x_2,y_2]_\mathcal{Q},y_3)	\\&~~~~~~+(-1)^{(|x_1|+|x_2|+|y_1|)(|y_2|+|y_3|)+|y_1|(|x_1|+|x_2|)}\Phi(y_3,y_1)(\Omega(x_1,x_2,y_2)\\
		&~~~~~~+\Phi(x_1,x_2)(v_2)+(-1)^{|x_2||y_2|}\Phi(y_2,x_1)(u_2)+(-1)^{|x_1|(|x_2|+|y_2|)}\Phi(x_2,y_2)(u_1))\\&~~~~~~~+(-1)^{|y_1|(|x_1|+|x_2|)}\Phi(y_1,[x_1,x_2,y_2]_\mathcal{Q})(v_3)\\	&~~~~~~+(-1)^{|y_3|(|x_1|+|x_2|+|y_2|)+|y_1|(|x_1|+|x_2|)}\Phi([x_1,x_2,y_2]_\mathcal{Q},y_3)(v_1),
	\end{split}\end{equation}
and
\begin{equation}\label{e214}
	\begin{split}
		&(-1)^{(|x_1|+|x_2|)(|y_1|+|y_2|)}[y_1+v_1,y_2+v_2,[x_1+u_1,x_2+u_2,y_3+v_3]_{\mathcal{L}_{\Phi,\Omega}}]_{\mathcal{L}_{\Phi,\Omega}}\\
		&=(-1)^{(|x_1|+|x_2|)(|y_1|+|y_2|)}[y_1,y_2,[x_1,x_2,y_3]_\mathcal{Q}]_\mathcal{Q}\\&~~~~~~ +(-1)^{(|x_1|+|x_2|)(|y_1|+|y_2|)}\Omega(y_1,y_2,[x_1,x_2,y_3]_\mathcal{Q})	\\&~~~~~~+(-1)^{(|x_1|+|x_2|)(|y_1|+|y_2|)}\Phi(y_1,y_2)(\Omega(x_1,x_2,y_3)+\Phi(x_1,x_2)(v_3)\\
		&~~~~~~+(-1)^{|x_2||y_3|}\Phi(y_3,x_1)(u_2)+(-1)^{|x_1|(|x_2|+|y_3|)}\Phi(x_2,y_3)(u_1))\\&~~~~~~~+(-1)^{|y_2|(|x_1|+|x_2|+|y_3|)+(|x_1|+|x_2|)(|y_1|+|y_2|)}\Phi([x_1,x_2,y_3]_\mathcal{Q},y_1)(v_2)\\	&~~~~~~+(-1)^{|y_1|(|x_1|+|x_2|+|y_2|+|y_3|)+(|x_1|+|x_2|)(|y_1|+|y_2|)}\Phi(y_2,[x_1,x_2,y_3]_\mathcal{Q})(v_1).
\end{split}\end{equation}
We will see that Eq $(\ref{e211})$=Eq $(\ref{e212})$+Eq $(\ref{e213})$+Eq $(\ref{e214})$, if $\Phi$ is a representation and $\Omega$ is a $2$-cocycle.
Hence, we have
\begin{equation}\label{e215}
	\begin{split}
		&[x_1+u_1,x_2+u_2,[y_1+v_1,y_2+v_2,y_3+v_3]_{\mathcal{L}_{\Phi,\Omega}}]_{\mathcal{L}_{\Phi,\Omega}}\\&=[[x_1+u_1,x_2+u_2,y_1+v_1]_{\mathcal{L}_{\Phi,\Omega}},y_2+v_2,y_3+v_3]_{\mathcal{L}_{\Phi,\Omega}}\\&~~~~~~+(-1)^{|y_1|(|x_1|+|x_2|)}[y_1+v_1,[x_1+u_1,x_2+u_2,y_2+v_2]_{\mathcal{L}_{\Phi,\Omega}},y_3+v_3]_{\mathcal{L}_{\Phi,\Omega}}\\&~~~~~~~+
		(-1)^{(|x_1|+|x_2|)(|y_1|+|y_2|)}[y_1+v_1,y_2+v_2,[x_1+u_1,x_2+u_2,y_3+v_3]_{\mathcal{L}_{\Phi,\Omega}}]_{\mathcal{L}_{\Phi,\Omega}},
	\end{split}
\end{equation}
which implies $\mathcal{L}_{\Phi,\Omega}$ is a 3-Lie superalgebra.
\end{proof}

\begin{proposition}\label{p2}
	If $\mathbb{H}^1(\mathcal{Q};\mathcal{P})=0$ then the extension splits.
\end{proposition}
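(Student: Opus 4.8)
The plan is to produce an explicit splitting section by exploiting the $2$-cocycle $\Omega$ attached to the extension in Lemma~\ref{l1} together with the computation already carried out in Corollary~\ref{coro21}. The guiding idea is that ``the extension splits'' is equivalent to the existence of an even section whose associated obstruction $\Omega$ vanishes identically, and the hypothesis $\mathbb{H}^1(\mathcal{Q};\mathcal{P})=0$ is exactly what lets us adjust the chosen section so as to kill $\Omega$.

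First I would fix an arbitrary even section $s:\mathcal{Q}\to\mathcal{L}$ of $\pi$ and form $\Phi$ and $\Omega$ as in Eqs \ref{e27} and \ref{e29}. By Lemma~\ref{l1}, $\Phi$ is a representation of $\mathcal{Q}$ on $\mathcal{P}$ and $\Omega$ is a $2$-cocycle associated to $(\Phi;\mathcal{P})$. Since $s$ is even and the superbracket is degree-preserving, $\Omega$ is of degree $\overline{0}$, so it determines a cohomology class $[\Omega]\in\mathbb{H}^1(\mathcal{Q};\mathcal{P})$. Because $\mathbb{H}^1(\mathcal{Q};\mathcal{P})=0$, this class is trivial, and hence there is a $1$-cochain $\mu\in\mathbb{C}^0(\mathcal{Q};\mathcal{P})$ with $\Omega=\delta_\Phi\mu$. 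Writing this out through Eq \ref{e25} with $|\mu|=\overline{0}$ gives
$$\Omega(x,y,z)=-\mu([x,y,z]_{\mathcal{Q}})+\Phi(x,y)(\mu(z))+(-1)^{|x|(|y|+|z|)}\Phi(y,z)(\mu(x))+(-1)^{|z|(|x|+|y|)}\Phi(z,x)(\mu(y)),$$
for all $x,y,z\in\mathcal{Q}$, which matches the formula obtained at the end of the proof of Corollary~\ref{coro21}.

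Next I would set $s':=s-\mu:\mathcal{Q}\to\mathcal{L}$. Since $\mu$ takes values in $\mathcal{P}=\ker\pi$, we still have $\pi s'=\pi s=1$, so $s'$ is again an even section of $\pi$. Applying the computation of Corollary~\ref{coro21} to the two sections $s$ and $s'$, whose difference is precisely $\mu$, the $2$-cocycle $\Omega'$ associated to $s'$ satisfies $\Omega-\Omega'=\delta_\Phi\mu=\Omega$, whence $\Omega'=0$. By the defining Eq \ref{e29}, the vanishing $\Omega'\equiv 0$ reads $[s'(x),s'(y),s'(z)]_{\mathcal{L}}=s'([x,y,z]_{\mathcal{Q}})$ for all $x,y,z\in\mathcal{Q}$; that is, the even section $s'$ is a homomorphism of $3$-Lie superalgebras. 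Therefore the abelian extension splits.

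The bulk of the argument is bookkeeping that is already packaged in Corollary~\ref{coro21}, so the only genuinely delicate point is the parity argument used to extract $\mu$. One must ensure that the coboundary witness can be taken \emph{even}, since both sections and homomorphisms in our setting are required to be even; this holds because $\delta_\Phi$ preserves degree and $\Omega$ is an even cocycle, so the even component of any witness already satisfies $\delta_\Phi\mu_{\overline{0}}=\Omega$, and one replaces $\mu$ by $\mu_{\overline{0}}$. This parity check, together with confirming that $\Omega'\equiv 0$ is literally the homomorphism condition for $s'$, is the step I expect to require the most care, though it remains mild.
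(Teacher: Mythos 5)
Your proof is correct and follows essentially the same route as the paper: use $\mathbb{H}^1(\mathcal{Q};\mathcal{P})=0$ to obtain an even coboundary witness (your $\mu$, the paper's $\xi$), replace the section $s$ by $s'=s-\mu$, and verify that $s'$ is a $3$-Lie superalgebra homomorphism. The only cosmetic difference is that you obtain $\Omega'=0$ by citing Corollary \ref{coro21} for the pair $(s,s')$, whereas the paper re-expands $[s'(x),s'(y),s'(z)]_{\mathcal{L}}$ directly using $[\mathcal{P},\mathcal{P},\mathcal{L}]=0$ and the coboundary identity --- the underlying computation is identical.
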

\begin{proof}
	It is sufficient to show that there is a section of $\pi$ which is a 3-Lie superalgebra homomorphism. It is known that $(\Phi;\mathcal{P})$ given by Eq \ref{e27}  is independent of the choice of the sections of $\pi$. Consider the 2-cocycle $\Omega$ given by Lemma \ref{l1}. Since $\mathbb{H}^1(\mathcal{Q};\mathcal{P})=0$, there exists an $\xi\in (\mathbb{C}^0(\mathcal{Q};\mathcal{P}))_{\overline{0}}$ such that $\Omega=\delta_{\Phi}\xi$. For any $x,y,z\in \mathcal{Q}$, it follows that
	\begin{equation}
	\begin{split}
			\Omega(x,y,z)=	&-\xi([x,y,z]_\mathcal{Q})+\Phi(x,y)(\xi(z))+(-1)^{|x|(|y|+|z|)}\Phi(y,z)(\xi(x))\\
			&+(-1)^{|z|(|x|+|y|)}\Phi(z,x)(\xi(y)).
	\end{split}
	\end{equation}
Define an even linear map $s^{'}:\mathcal{Q}\rightarrow \mathcal{P}$ by 
$$s^{'}(x)=s(x)-\xi(x).$$
Note that $s^{'}$ is also a section of $\pi$. Then for any $x,y,z\in \mathcal{Q}$, we have
	\begin{equation*}
		\begin{split}
			&~~~~~~~~~[s^{'}(x),s^{'}(y),s^{'}(z)]_\mathcal{L}\\
			&=[s(x)-\xi(x),s(y)-\xi(y),s(z)-\xi(z)]_\mathcal{L}\\
			&=[s_1(x),s_1(y),s_1(z)]_\mathcal{L}-\Phi(x,y)(\xi(z))-(-1)^{|x|(|y|+|z|)}\Phi(y,z)(\xi(x))-(-1)^{|z|(|x|+|y|)}\Phi(z,x)(\xi(y))\\
			&=s([x,y,z]_\mathcal{Q})+\Omega(x,y,z)-\Phi(x,y)(\xi(z))-(-1)^{|x|(|y|+|z|)}\Phi(y,z)(\xi(x))-(-1)^{|z|(|x|+|y|)}\Phi(z,x)(\xi(y))\\
			&=s([x,y,z]_\mathcal{Q})-\xi([x,y,z]_\mathcal{Q})\\
			&=s^{'}([x,y,z]_\mathcal{Q}).
		\end{split}
	\end{equation*}
 Hence $s^{'}$ is a $3$-Lie superalgebra homomorphism. 
\end{proof}

\section{Cohomology Classes and a Lie superalgebra}
Let $(\Phi;\mathcal{P})$ be a representation of a $3$-Lie superalgebra $\mathcal{Q}$ on $\mathcal{P}$. In this section, we use superderivations of $\mathcal{P}$, $\mathcal{Q}$ to construct first-order cohomology classes. By using this, we construct a Lie superalgebra and its representation on $\mathbb{H}^1(\mathcal{Q};\mathcal{P})$.
\par Let $\mathcal{P},\mathcal{Q}$ be $3$-Lie superalgebras. Given a representation $(\Phi;\mathcal{P})$ of $\mathcal{Q}$. Suppose $\Omega \in (\mathbb{C}^1(\mathcal{Q};\mathcal{P}))_{\overline{0}}$. For any pair $(\mathcal{D}_p,\mathcal{D}_q)\in Der(\mathcal{P})\times Der(\mathcal{Q})$, define a $2$-cochain $Ob^\Omega _{(\mathcal{D}_p,\mathcal{D}_q)} \in \mathbb{C}^1(\mathcal{Q};\mathcal{P})$ as 
\begin{equation}\label{e31}
	Ob^\Omega _{(\mathcal{D}_p,\mathcal{D}_q)}=\mathcal{D}_p\Omega-\Omega(\mathcal{D}_q\otimes 1\otimes 1)-\Omega(1\otimes \mathcal{D}_q\otimes  1)-\Omega(1\otimes 1\otimes  \mathcal{D}_q),
\end{equation}
where ``1'' denotes the identity map and the degree of identity map is always even. This is equivalent to 
\begin{equation}\label{e32}
	\begin{split}
		Ob^\Omega _{(\mathcal{D}_p,\mathcal{D}_q)}(x,y,z)=&\mathcal{D}_p(\Omega(x,y,z))-\Omega(\mathcal{D}_q(x),y,z)-(-1)^{|\alpha| |x|}\Omega(x,\mathcal{D}_q(y),z)\\&-(-1)^{|\alpha| (|x|+|y|)}\Omega(x,y,\mathcal{D}_q(z)),
	\end{split}
\end{equation}
for $x,y,z\in \mathcal{Q}$ and $|Ob^\Omega _{(\mathcal{D}_p,\mathcal{D}_q)}|=|\mathcal{D}_p|=|\mathcal{D}_q|=\alpha$ where $\alpha \in \mathbb{Z}_2$.
\par We begin with the following.
\begin{lemma}\label{l31}
	Let $(\Phi;\mathcal{P})$ be a representation of $\mathcal{Q}$ and $\Omega \in (\mathbb{C}^1(\mathcal{G};\mathcal{V}))_{\overline{0}}$ associated to the representation $(\Phi;\mathcal{P})$. Assume that a pair $(\mathcal{D}_p,\mathcal{D}_q)\in Der(\mathcal{P})\times Der(\mathcal{Q})$ satisfies that 
	\begin{equation}\label{e33}
		\mathcal{D}_p\Phi(x,y)-(-1)^{|\alpha|(|x|+|y|)}\Phi(x,y)\mathcal{D}_p=\Phi(\mathcal{D}_q(x),y)+(-1)^{|\alpha||x|}\Phi(x,\mathcal{D}_q(y)),
	\end{equation} 
for $x,y\in \mathcal{Q}$ and $|\mathcal{D}_p|=|\mathcal{D}_q|=\alpha$. If $\Omega$ is a $2$-cocycle then $Ob^\Omega _{(\mathcal{D}_p,\mathcal{D}_q)}\in \mathbb{C}^1(\mathcal{G};\mathcal{V})$ given by Eq \ref{e32} is also a $2$-cocycle.
\end{lemma}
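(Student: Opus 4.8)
The plan is to exhibit $Ob^\Omega_{(\mathcal{D}_p,\mathcal{D}_q)}$ as the image of $\Omega$ under a natural ``Lie derivative'' operator $\mathfrak{L}=\mathfrak{L}_{(\mathcal{D}_p,\mathcal{D}_q)}$ on the cochain complex, and then to prove that this operator commutes with the coboundary $\delta_\Phi$. Granting the commutation identity $\delta_\Phi\circ\mathfrak{L}=\mathfrak{L}\circ\delta_\Phi$, the result is immediate: since $\Omega$ is a $2$-cocycle we have $\delta_\Phi\Omega=0$, whence
\[
\delta_\Phi\big(Ob^\Omega_{(\mathcal{D}_p,\mathcal{D}_q)}\big)=\delta_\Phi(\mathfrak{L}\Omega)=\mathfrak{L}(\delta_\Phi\Omega)=0,
\]
so $Ob^\Omega_{(\mathcal{D}_p,\mathcal{D}_q)}$ is again a $2$-cocycle. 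Note the commutation is a genuine equality (no sign), because $\delta_\Phi$ is even while $\mathfrak{L}$ carries the degree $\alpha$ of the pair, so $[\delta_\Phi,\mathfrak{L}]=\delta_\Phi\mathfrak{L}-\mathfrak{L}\delta_\Phi$.

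Concretely, for $f\in\mathbb{C}^{p-1}(\mathcal{Q};\mathcal{P})$ I would set $\mathcal{D}_q(x\wedge y):=\mathcal{D}_q x\wedge y+(-1)^{|\alpha||x|}x\wedge\mathcal{D}_q y$ on $\wedge^2\mathcal{Q}$ and define
\begin{equation*}
\begin{split}
(\mathfrak{L}f)(X_1,\dots,X_{p-1},z) &=\mathcal{D}_p\big(f(X_1,\dots,X_{p-1},z)\big)-\sum_{i=1}^{p-1}(-1)^{|\alpha|(|X_1|+\cdots+|X_{i-1}|)}f(X_1,\dots,\mathcal{D}_q X_i,\dots,z)\\
&\quad -(-1)^{|\alpha|(|X_1|+\cdots+|X_{p-1}|)}f(X_1,\dots,X_{p-1},\mathcal{D}_q z).
\end{split}
\end{equation*}
For $p=2$ this unwinds exactly to Eq \ref{e32}, so that $Ob^\Omega_{(\mathcal{D}_p,\mathcal{D}_q)}=\mathfrak{L}\Omega$; one checks this by substituting $\mathcal{D}_q X_1=\mathcal{D}_q x\wedge y+(-1)^{|\alpha||x|}x\wedge\mathcal{D}_q y$ and $(-1)^{|\alpha||X_1|}=(-1)^{|\alpha|(|x|+|y|)}$ into the definition.

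The core of the argument is then the commutation identity on $\mathbb{C}^1(\mathcal{Q};\mathcal{P})$, which I would verify by expanding $\delta_\Phi(\mathfrak{L}f)(X_1,X_2,x_5)$ and $\mathfrak{L}(\delta_\Phi f)(X_1,X_2,x_5)$ from the explicit formula Eq \ref{e26} and comparing summand by summand. The terms fall into three families. The \emph{bracket terms} — those of the shape $f([X_1,X_2]_{\mathbb{F}},x_5)$, $f(X_2,[X_1,x_5])$, $f(X_1,[X_2,x_5])$ — match because $\mathcal{D}_q$ is a superderivation of $\mathcal{Q}$ (Definition \ref{d22}), so commuting $\mathcal{D}_q$ through $[\cdot,\cdot,\cdot]_\mathcal{Q}$, equivalently through $[\cdot,\cdot]_{\mathbb{F}}$ on $\wedge^2\mathcal{Q}$ (Eq \ref{e22}), reproduces precisely the argument-shifts prescribed by $\mathfrak{L}$ on the other side. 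The \emph{representation terms} — those carrying an outer $\Phi$, such as $\Phi(X_1)f(X_2,x_5)$ and $\Phi(x_4,x_5)f(X_1,x_3)$ — are where the hypothesis Eq \ref{e33} is essential: the mismatch between $\mathcal{D}_p\,\Phi(\cdot)$ and $\Phi(\cdot)\,\mathcal{D}_p$ is converted, by Eq \ref{e33}, into exactly the $\Phi(\mathcal{D}_q\,\cdot)$ shifts demanded by $\mathfrak{L}$, with the representation axioms of Definition \ref{d23} used to align the doubly nested $\Phi$-products. Finally the bare $\mathcal{D}_p$-terms pass straight through.

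The step I expect to be the main obstacle is the sign bookkeeping rather than any conceptual difficulty. Each summand of Eq \ref{e26} carries a Koszul sign depending on $|\alpha|$ and on the homogeneous degrees $|x_i|$, and one must confirm that inserting $\mathcal{D}_q$ into a given slot shifts the running degree sum identically on both sides of the claimed identity; the odd case $|\alpha|=\overline{1}$ is the delicate one, since then passing $\mathcal{D}_p$ or $\mathcal{D}_q$ across an odd element introduces an extra sign that must be tracked consistently through Eq \ref{e33}. I would organize the check by fixing each of the seven summands of Eq \ref{e26} in turn, substituting the definition Eq \ref{e32}, and matching it against the corresponding summand of $\mathfrak{L}(\delta_\Phi\Omega)$. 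Once the commutation identity is in hand, the conclusion that $Ob^\Omega_{(\mathcal{D}_p,\mathcal{D}_q)}$ is a $2$-cocycle follows at once from $\delta_\Phi\Omega=0$.
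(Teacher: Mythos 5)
Your proposal is correct, and it takes a genuinely different route from the paper. The paper proves Lemma \ref{l31} by brute force: it expands $\delta_\Phi Ob^\Omega_{(\mathcal{D}_p,\mathcal{D}_q)}$ into the eight summands of Eq \ref{e26}, substitutes the definition Eq \ref{e32} into each, and then eliminates all bracket-type terms by invoking the cocycle identity $\delta_\Phi\Omega=0$ repeatedly --- once with $\mathcal{D}_p$ applied to it (Eq \ref{e314}) and once for each $\mathcal{D}_q$-shifted argument tuple $(\mathcal{D}_q x_i$ in slot $i$, Eqs \ref{e315}--\ref{e319}) --- before the compatibility condition Eq \ref{e33} kills what remains. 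Your argument instead isolates the operator $\mathfrak{L}$ and proves the chain-map identity $\delta_\Phi\mathfrak{L}=\mathfrak{L}\delta_\Phi$ for \emph{arbitrary} cochains, using the cocycle hypothesis on $\Omega$ only in the final line; this is a strictly stronger statement, and the two hypotheses you flag (the superderivation property of $\mathcal{D}_q$ for the bracket terms, Eq \ref{e33} for the $\Phi$-terms) are exactly the ones the paper's computation consumes, so your term-matching plan would go through. What your route buys is real: since a chain map sends coboundaries to coboundaries, the same identity immediately yields the well-definedness of $\Psi$ on $\mathbb{H}^1(\mathcal{Q};\mathcal{P})$, which the paper proves by a separate computation in the lemma containing Eq \ref{e322} (that computation is precisely your commutation identity on $\mathbb{C}^0$), and it generalizes to all cochain degrees rather than just $p=2$. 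Two small corrections: the representation axioms of Definition \ref{d23} are not actually needed for the commutation --- no nested $\Phi$-products arise in the commutator, since those only appear in $\delta_\Phi^2$; and it is worth noting that neither your argument nor the paper's uses that $\mathcal{D}_p$ is a superderivation of $\mathcal{P}$, only Eq \ref{e33}. The one thing to be candid about is that your proposal sketches rather than executes the sign-level verification of the commutation identity; that verification is the same volume of Koszul bookkeeping as the paper's Eqs \ref{e36}--\ref{e319}, so the approach trades no labor away, but it organizes the labor so that it is done once and reused.
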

\begin{proof}
	It is sufficent to show that $\delta_\Phi Ob^\Omega _{(\mathcal{D}_p,\mathcal{D}_q)}=0$. Since $\Omega$ is a $2$-cocycle, so $\delta_\Phi \Omega=0$, by Eq \ref{e26} it follows that, for any $x_i\in \mathcal{Q}$
	
\begin{equation} \label{e34}
\begin{split}
		0=&-\Omega ([x_1,x_2,x_3]_\mathcal{Q},x_4,x_5)-(-1)^{|x_3|(|x_1|+|x_2|)}\Omega(x_3,[x_1,x_2,x_4]_\mathcal{Q},x_5)\\&-(-1)^{(|x_3|+|x_4|)(|x_1|+|x_2|)}\Omega(x_3,x_4,[x_1,x_2,x_5]_\mathcal{Q})+\Omega(x_1,x_2,[x_3,x_4,x_5]_\mathcal{Q})\\
		&+\Phi(x_1,x_2)\Omega(x_3,x_4,x_5)-(-1)^{(|x_3|+|x_4|)(|x_1|+|x_2|)}\Phi(x_3,x_4)\Omega(x_1,x_2,x_5)\\
		&-(-1)^{(|x_4|+|x_5|)(|x_1|+|x_2|+|x_3|)}\Phi(x_4,x_5)\Omega(x_1,x_2,x_3)\\&-(-1)^{(|x_3|+|x_5|)(|x_1|+|x_2|)+(|x_3|+|x_4|)|x_5|}\Phi(x_5,x_3)\Omega(x_1,x_2,x_4).
\end{split}
\end{equation}	
Now $|Ob^\Omega _{(\mathcal{D}_p,\mathcal{D}_q)}|=|\mathcal{D}_p|=|\mathcal{D}_q|=\alpha$, we have	
	\begin{equation}\label{e35}
	\begin{split}
			\delta_\Phi &Ob^\Omega _{(\mathcal{D}_p,\mathcal{D}_q)}(x_1,x_2,x_3,x_4,x_5)\\&=-\underbrace{Ob^\Omega _{(\mathcal{D}_p,\mathcal{D}_q)}([x_1,x_2,x_3]_\mathcal{Q},x_4,x_5)}_{(1)}-\underbrace{(-1)^{|x_3|(|x_1|+|x_2|)}Ob^\Omega _{(\mathcal{D}_p,\mathcal{D}_q)}(x_3,[x_1,x_2,x_4]_\mathcal{Q},x_5)}_{(2)}\\&
			-\underbrace{(-1)^{(|x_3|+|x_4|)(|x_1|+|x_2|)}Ob^\Omega _{(\mathcal{D}_p,\mathcal{D}_q)}(x_3,x_4,[x_1,x_2,x_5]_\mathcal{Q})}_{(3)}
			+\underbrace{Ob^\Omega _{(\mathcal{D}_p,\mathcal{D}_q)}(x_1,x_2,[x_3,x_4,x_5]_\mathcal{Q})}_{(4)}\\
			&+\underbrace{(-1)^{\alpha(|x_1|+|x_2|)}\Phi(x_1,x_2)Ob^\Omega _{(\mathcal{D}_p,\mathcal{D}_q)}(x_3,x_4,x_5)}_{(5)}\\&-\underbrace{(-1)^{(|x_3|+|x_4|)(\alpha+|x_1|+|x_2|)}\Phi(x_3,x_4)Ob^\Omega _{(\mathcal{D}_p,\mathcal{D}_q)}(x_1,x_2,x_5)}_{(6)}\\
			&-\underbrace{(-1)^{(|x_4|+|x_5|)(\alpha+|x_1|+|x_2|+|x_3|)}\Phi(x_4,x_5)Ob^\Omega _{(\mathcal{D}_p,\mathcal{D}_q)}(x_1,x_2,x_3)}_{(7)}\\&-\underbrace{(-1)^{(|x_3|+|x_5|)(\alpha+|x_1|+|x_2|)+(|x_3|+|x_4|)|x_5|}\Phi(x_5,x_3)Ob^\Omega _{(\mathcal{D}_p,\mathcal{D}_q)}(x_1,x_2,x_4)}_{(8)}.
		\end{split}
		\end{equation}	
Applying the definition of $Ob^\Omega _{(\mathcal{D}_p,\mathcal{D}_q)}$ as in Eq \ref{e32}, we get 	
\begin{equation}\label{e36}
		\begin{split}
			(1)&=-\mathcal{D}_p(\Omega([x_1,x_2,x_3]_\mathcal{Q},x_4,x_5))+\Omega([\mathcal{D}_q(x_1),x_2,x_3]_\mathcal{Q},x_4,x_5)\\
			&~~~~~~+(-1)^{\alpha|x_1|}\Omega([x_1,\mathcal{D}_q(x_2),x_3]_\mathcal{Q},x_4,x_5)\\&~~~~~~+(-1)^{\alpha(|x_1|+|x_2|)}\Omega([x_1,x_2,\mathcal{D}_q(x_3)]_\mathcal{Q},x_4,x_5)\\
			&~~~~~~+(-1)^{\alpha(|x_1|+|x_2|+|x_3|)}\Omega([x_1,x_2,x_3]_\mathcal{Q},\mathcal{D}_q(x_4),x_5)\\
			&~~~~~~+(-1)^{\alpha(|x_1|+|x_2|+|x_3|+|x_4|)}\Omega([x_1,x_2,x_3]_\mathcal{Q},x_4,\mathcal{D}_q(x_5)),
		\end{split}
	\end{equation}
\begin{equation}\label{e37}
	\begin{split}
		(2)
		&=-(-1)^{|x_3|(|x_1|+|x_2|)}\mathcal{D}_p(\Omega(x_3,[x_1,x_2,x_4]_\mathcal{Q},x_5))\\&~~~~~~+(-1)^{|x_3|(|x_1|+|x_2|)}\Omega(\mathcal{D}_q(x_3),[x_1,x_2,x_4]_\mathcal{Q},x_5)\\
		&~~~~~~+(-1)^{|x_3|(|x_1|+|x_2|)+\alpha|x_3|}\Omega(x_3,[\mathcal{D}_q(x_1),x_2,x_4]_\mathcal{Q},x_5)\\&~~~~~~+(-1)^{|x_3|(|x_1|+|x_2|)+\alpha(|x_1|+|x_3|)}\Omega(x_3,[x_1,\mathcal{D}_q(x_2),x_4]_\mathcal{Q},x_5)\\&~~~~~~+(-1)^{|x_3|(|x_1|+|x_2|)+\alpha(|x_1|+|x_2|+|x_3|)}\Omega(x_3,[x_1,x_2,\mathcal{D}_q(x_4)]_\mathcal{Q},x_5)\\
		&~~~~~~+(-1)^{|x_3|(|x_1|+|x_2|)+\alpha(|x_1|+|x_2|+|x_3|+|x_4|)}\Omega(x_3,[x_1,x_2,x_4]_\mathcal{Q},\mathcal{D}_q(x_5)),
	\end{split}
	\end{equation}
	
\begin{equation}\label{e38}
	\begin{split}
		(3)
		&=-(-1)^{(|x_1|+|x_2|)(|x_3|+|x_4|)}\mathcal{D}_p(\Omega (x_3,x_4,[x_1,x_2,x_5]_\mathcal{Q}))\\&~~~~~~+(-1)^{(|x_1|+|x_2|)(|x_3|+|x_4|)}\Omega (\mathcal{D}_q (x_3),x_4,[x_1,x_2,x_5]_\mathcal{Q})\\
		&~~~~~~+(-1)^{(|x_1|+|x_2|)(|x_3|+|x_4|)+\alpha|x_3|}\Omega (x_3,\mathcal{D}_q (x_4),[x_1,x_2,x_5]_\mathcal{Q})\\
		&~~~~~~+(-1)^{(|x_1|+|x_2|)(|x_3|+|x_4|)+\alpha(|x_3|+|x_4|)}\Omega (x_3,x_4,[\mathcal{D}_q (x_1),x_2,x_5]_\mathcal{Q})\\
		&~~~~~~+(-1)^{(|x_1|+|x_2|)(|x_3|+|x_4|)+\alpha(|x_1|+|x_3|+|x_4|)}\Omega (x_3,x_4,[x_1,\mathcal{D}_q (x_2),x_5]_\mathcal{Q})\\
		&~~~~~~+(-1)^{(|x_1|+|x_2|)(|x_3|+|x_4|)+\alpha(|x_1|+|x_2|+|x_3|+|x_4|)}\Omega (x_3,x_4,[x_1,x_2,\mathcal{D}_q (x_5)]_\mathcal{Q}),
	\end{split}
\end{equation}	
	
\begin{equation}\label{e39}
	\begin{split}
	(4)&=\mathcal{D}_p(\Omega(x_1,x_2,[x_3,x_4,x_5]_\mathcal{Q}))-\Omega (\mathcal{D}_q(x_1),x_2,[x_3,x_4,x_5]_\mathcal{Q})\\
		&~~~~~~-(-1)^{\alpha|x_1|}\Omega (x_1,\mathcal{D}_q(x_2),[x_3,x_4,x_5]_\mathcal{Q})\\&~~~~~~-(-1)^{\alpha(|x_1|+|x_2|)}\Omega (x_1,x_2,[\mathcal{D}_q(x_3),x_4,x_5]_\mathcal{Q})\\&~~~~~~-(-1)^{\alpha(|x_1|+|x_2|+|x_3|)}\Omega (x_1,x_2,[x_3,\mathcal{D}_q(x_4),x_5]_\mathcal{Q})\\&~~~~~~
		-(-1)^{\alpha(|x_1|+|x_2|+|x_3|+|x_4|)}\Omega (x_1,x_2,[x_3,x_4,\mathcal{D}_q(x_5)]_\mathcal{Q}),
	\end{split}
\end{equation}

\begin{equation}\label{e310}
	\begin{split}
		(5)&=-(-1)^{\alpha(|x_1|+|x_2|)}\Phi(x_1,x_2)(\mathcal{D}_p(\Omega(x_3,x_4,x_5))-\Omega(\mathcal{D}_q(x_3),x_4,x_5)\\
		&~~~~~~-(-1)^{\alpha|x_3|}\Omega(x_3,\mathcal{D}_q(x_4),x_5)-(-1)^{\alpha(|x_3|+|x_4|)}\Omega(x_3,x_4,\mathcal{D}_q(x_5))),
	\end{split}
\end{equation}	
	\begin{equation}\label{e311}
		\begin{split}
			(6)&=-(-1)^{(|x_3|+|x_4|)(\alpha+|x_1|+|x_2|)}\Phi(x_3,x_4)(\mathcal{D}_p(\Omega(x_1,x_2,x_5))-\Omega(\mathcal{D}_q(x_1),x_2,x_5)\\
			&~~~~~~-(-1)^{\alpha|x_1|}\Omega(x_1,\mathcal{D}_q(x_2),x_5)-(-1)^{\alpha(|x_1|+|x_2|)}\Omega(x_1,x_2,\mathcal{D}_q(x_5))),
		\end{split}
	\end{equation}	
	\begin{equation}\label{e312}
	\begin{split}
		(7)&=-(-1)^{(|x_4|+|x_5|)(\alpha+|x_1|+|x_2|+|x_3|)}\Phi(x_4,x_5)(\mathcal{D}_p(\Omega(x_1,x_2,x_3))-\Omega(\mathcal{D}_q(x_1),x_2,x_3)\\
		&~~~~~~-(-1)^{\alpha|x_1|}\Omega(x_1,\mathcal{D}_q(x_2),x_3)-(-1)^{\alpha(|x_1|+|x_2|)}\Omega(x_1,x_2,\mathcal{D}_q(x_3))),
	\end{split}
\end{equation}	
\begin{equation}\label{e313}
	\begin{split}
		(8)&=-(-1)^{(|x_3|+|x_5|)(\alpha+|x_1|+|x_2|)+|x_5|(|x_3|+|x_4|)}\Phi(x_5,x_3)(\mathcal{D}_p(\Omega(x_1,x_2,x_4))-\Omega(\mathcal{D}_q(x_1),x_2,x_4)\\
		&~~~~~~-(-1)^{\alpha|x_1|}\Omega(x_1,\mathcal{D}_q(x_2),x_4)-(-1)^{\alpha(|x_1|+|x_2|)}\Omega(x_1,x_2,\mathcal{D}_q(x_4))).
	\end{split}
\end{equation}	
	
	\begin{equation}\label{e314}
		\begin{split}
		&-\mathcal{D}_p(\Omega([x_1,x_2,x_3]_\mathcal{Q},x_4,x_5))-(-1)^{|x_3|(|x_1|+|x_2|)}\mathcal{D}_p(\Omega(x_3,[x_1,x_2,x_4]_\mathcal{Q},x_5))\\&-(-1)^{(|x_1|+|x_2|)(|x_3|+|x_4|)}\mathcal{D}_p(\Omega(x_3,x_4,[x_1,x_2,x_5]_\mathcal{Q}))+\mathcal{D}_p(\Omega (x_1,x_2,[x_3,x_4,x_5]_\mathcal{Q}))\\
			&=-\mathcal{D}_p(\Phi(x_1,x_2)(\Omega(x_3,x_4,x_5)))+(-1)^{(|x_1|+|x_2|)(|x_3|+|x_4|)}\mathcal{D}_p(\Phi(x_3,x_4)(\Omega(x_1,x_2,x_5)))\\&+(-1)^{(|x_1|+|x_2|+|x_3|)(|x_4|+|x_5|)}\mathcal{D}_p(\Phi(x_4,x_5)(\Omega(x_1,x_2,x_3)))\\& +(-1)^{(|x_1|+|x_2|)(|x_3|+|x_5|)+|x_5|(|x_3|+|x_4|)}\mathcal{D}_p(\Phi(x_5,x_3)(\Omega(x_1,x_2,x_4))).
		\end{split}
	\end{equation}
	For Eq \ref{e36}-\ref{e314}, by suitable combinations and with the aid of Eq \ref{e34}, we get
\begin{equation}\label{e315}
	\begin{split}
		&\Omega([\mathcal{D}_q(x_1),x_2,x_3]_\mathcal{Q},x_4,x_5)+(-1)^{|x_3|(|x_1|+|x_2|)+\alpha|x_3|}\Omega(x_3,[\mathcal{D}_q(x_1),x_2,x_4]_\mathcal{Q},x_5)\\
		&+(-1)^{(|x_1|+|x_2|)(|x_3|+|x_4|)+\alpha(|x_3|+|x_4|)}\Omega (x_3,x_4,[\mathcal{D}_q (x_1),x_2,x_5]_\mathcal{Q})\\
		&-\Omega (\mathcal{D}_q(x_1),x_2,[x_3,x_4,x_5]_\mathcal{Q})+(-1)^{(|x_3|+|x_4|)(\alpha+|x_1|+|x_2|)}\Phi(x_3,x_4)\Omega(\mathcal{D}_q(x_1),x_2,x_5)\\
		&+(-1)^{(|x_4|+|x_5|)(\alpha+|x_1|+|x_2|+|x_3|)}\Phi(x_4,x_5)\Omega(\mathcal{D}_q(x_1),x_2,x_3)\\&-(-1)^{(|x_3|+|x_5|)(\alpha+|x_1|+|x_2|)+|x_5|(|x_3|+|x_4|)}\Phi(x_5,x_3)\Omega(\mathcal{D}_q(x_1),x_2,x_4)\\
		&=\Phi(\mathcal{D}_qx_1,x_2)\Omega(x_3,x_4,x_5),
	\end{split}
\end{equation}

\begin{equation}\label{e316}
	\begin{split}
		&(-1)^{\alpha|x_1|}\Omega([x_1,\mathcal{D}_q(x_2),x_3]_\mathcal{Q},x_4,x_5)+(-1)^{(|x_3|+\alpha)(|x_1|+|x_2|)}\Omega(x_3,[x_1,\mathcal{D}_q(x_2),x_4]_\mathcal{Q},x_5)\\
		&+(-1)^{(|x_1|+|x_2|+\alpha)(|x_3|+|x_4|)+\alpha|x_1|}\Omega (x_3,x_4,[x_1,\mathcal{D}_q (x_2),x_5]_\mathcal{Q})\\
		&-(-1)^{\alpha|x_1|}\Omega(x_1,\mathcal{D}_q(x_2),[x_3,x_4,x_5]_\mathcal{Q})\\
		&+(-1)^{(|x_3|+|x_4|)(\alpha+|x_1|+|x_2|)+\alpha|x_1|}\Phi(x_3,x_4)\Omega(x_1,\mathcal{D}_q(x_2),x_5)\\
		&+(-1)^{(|x_4|+|x_5|)(\alpha+|x_1|+|x_2|+|x_3|)+\alpha|x_1|}\Phi(x_4,x_5)\Omega(x_1,\mathcal{D}_q(x_2),x_3)\\
		&+(-1)^{(|x_3|+|x_5|)(\alpha+|x_1|+|x_2|)+|x_5|(|x_3|+|x_4|)+\alpha|x_1|}\Phi(x_5,x_3)\Omega(x_1,\mathcal{D}_q(x_2),x_4)\\
		&=(-1)^{\alpha|x_1|}\Phi(x_1,\mathcal{D}_q(x_2))\Omega(x_3,x_4,x_5),
	\end{split}
\end{equation}

\begin{equation}\label{e317}
	\begin{split}
		&(-1)^{\alpha(|x_1|+|x_2|)}\Omega([x_1,x_2,\mathcal{D}_q(x_3)]_\mathcal{Q},x_4,x_5)+(-1)^{|x_3|(|x_1|+|x_2|)}\Omega(\mathcal{D}_q(x_3),[x_1,x_2,x_4]_\mathcal{Q},x_5)\\
		&+(-1)^{(|x_1|+|x_2|)(|x_3|+|x_4|)}\Omega (\mathcal{D}_q (x_3),x_4,[x_1,x_2,x_5]_\mathcal{Q})-(-1)^{\alpha(|x_1|+|x_2|)}\Omega (x_1,x_2,[\mathcal{D}_q(x_3),x_4,x_5]_\mathcal{Q})\\&+(-1)^{\alpha(|x_1|+|x_2|)}\Phi(x_1,x_2)\Omega(\mathcal{D}_q(x_3),x_4,x_5)\\&+(-1)^{(|x_4|+|x_5|)(\alpha+|x_1|+|x_2|+|x_3|)+\alpha(|x_1|+|x_2|)}\Phi(x_4,x_5)\Omega(x_1,x_2,\mathcal{D}_q(x_3))\\
		&=-(-1)^{(|x_1|+|x_2|)(|x_3|+|x_4|)}\Phi(\mathcal{D}_q(x_3),x_4)\Omega(x_1,x_2,x_5)\\&-(-1)^{(|x_1|+|x_2|)(|x_3|+|x_5|)+|x_5|(\alpha+|x_3|+|x_4|)}\Phi(x_5,\mathcal{D}_q(x_3))\Omega(x_1,x_2,x_4),
	\end{split}
\end{equation}

\begin{equation}\label{e318}
	\begin{split}
		&(-1)^{\alpha(|x_1|+|x_2|+|x_3|)}\Omega([x_1,x_2,x_3]_\mathcal{Q},\mathcal{D}_q(x_4),x_5)\\
		&+(-1)^{|x_3|(|x_1|+|x_2|)+\alpha(|x_1|+|x_2|+|x_3|)}\Omega(x_3,[x_1,x_2,\mathcal{D}_q(x_4)]_\mathcal{Q},x_5)\\
		&+(-1)^{(|x_1|+|x_2|)(|x_3|+|x_4|)+\alpha|x_3|}\Omega (x_3,\mathcal{D}_q (x_4),[x_1,x_2,x_5]_\mathcal{Q})\\
		&-(-1)^{\alpha(|x_1|+|x_2|+|x_3|)}\Omega (x_1,x_2,[x_3,\mathcal{D}_q(x_4),x_5]_\mathcal{Q})+(-1)^{\alpha(|x_1|+|x_2|+|x_3|)}\Phi(x_1,x_2)\Omega(x_3,\mathcal{D}_q(x_4),x_5)\\
		&+(-1)^{(|x_3|+|x_5|)(\alpha+|x_1|+|x_2|)+|x_5|(|x_3|+|x_4|)+\alpha(|x_1|+|x_2|)}\Phi(x_5,x_3)\Omega(x_1,x_2,\mathcal{D}_q(x_4))\\
		&=-(-1)^{(|x_3|+|x_4|)(|x_1|+|x_2|)+\alpha|x_3|}\Phi(x_3,\mathcal{D}_q(x_4))\Omega(x_1,x_2,x_5)\\&-(-1)^{(|x_4|+|x_5|)(|x_1|+|x_2|+|x_3|)}\Phi(\mathcal{D}_q(x_4),x_5)\Omega(x_1,x_2,x_3),
	\end{split}
\end{equation}
and
\begin{equation}\label{e319}
	\begin{split}
		&(-1)^{\alpha(|x_1|+|x_2|+|x_3|+|x_4|)}\Omega([x_1,x_2,x_3]_\mathcal{Q},x_4,\mathcal{D}_q(x_5))\\
		&+(-1)^{|x_3|(|x_1|+|x_2|)+\alpha(|x_1|+|x_2|+|x_3|+|x_4|)}\Omega(x_3,[x_1,x_2,x_4]_\mathcal{Q},\mathcal{D}_q(x_5))\\
		&+(-1)^{(|x_1|+|x_2|)(|x_3|+|x_4|)+\alpha(|x_1|+|x_2|+|x_3|+|x_4|)}\Omega (x_3,x_4,[x_1,x_2,\mathcal{D}_q (x_5)]_\mathcal{Q})\\
		&-(-1)^{\alpha(|x_1|+|x_2|+|x_3|+|x_4|)}\Omega(x_1,x_2,[x_3,x_4,\mathcal{D}_q(x_5)]_\mathcal{Q})\\&+(-1)^{\alpha(|x_1|+|x_2|+|x_3|+|x_4|)}\Phi(x_1,x_2)\Omega(x_3,x_4,\mathcal{D}_q(x_5))\\&+(-1)^{(|x_3|+|x_4|)(\alpha+|x_1|+|x_2|)+\alpha(|x_1|+|x_2|)}\Phi(x_3,x_4)\Omega(x_1,x_2,\mathcal{D}_q(x_5))\\
		&=-(-1)^{(|x_4|+|x_5|)(|x_1|+|x_2|+|x_3|)}\Phi(x_4,\mathcal{D}_q(x_5))\Omega(x_1,x_2,x_3)\\&-(-1)^{(|x_1|+|x_2|)(|x_3|+|x_5|)+(|x_3|+|x_4|)(|x_5|+\alpha)}\Phi(\mathcal{D}_q(x_5),x_3)\Omega(x_1,x_2,x_4).
	\end{split}
\end{equation}

By inserting Eq \ref{e315}-\ref{e319} into Eq \ref{e34}, we get
\begin{equation*}\label{e320}
\begin{split}
		&(\delta_\Phi Ob^\Omega _{(\mathcal{D}_p,\mathcal{D}_q)})(x_1,x_2,x_3,x_4,x_5)\\
		&=-\mathcal{D}_p(\Phi(x_1,x_2)(\Omega(x_3,x_4,x_5)))+(-1)^{(|x_1|+|x_2|)(|x_3|+|x_4|)}\mathcal{D}_p(\Phi(x_3,x_4)(\Omega(x_1,x_2,x_5)))\\
		&+(-1)^{(|x_1|+|x_2|+|x_3|)(|x_4|+|x_5|)}\mathcal{D}_p(\Phi(x_4,x_5)(\Omega(x_1,x_2,x_3)))\\
		& +(-1)^{(|x_1|+|x_2|)(|x_3|+|x_5|)+|x_5|(|x_3|+|x_4|)}\mathcal{D}_p(\Phi(x_5,x_3)(\Omega(x_1,x_2,x_4)))\\  
		&+(-1)^{\alpha(|x_1|+|x_2|)}\Phi(x_1,x_2)(\mathcal{D}_p(\Omega(x_3,x_4,x_5)))\\
		&-(-1)^{(|x_3|+|x_4|)(\alpha+|x_1|+|x_2|)}\Phi(x_3,x_4)(\mathcal{D}_p(\Omega(x_1,x_2,x_5))\\
		&-(-1)^{(|x_4|+|x_5|)(\alpha+|x_1|+|x_2|+|x_3|)}\Phi(x_4,x_5)(\mathcal{D}_p(\Omega(x_1,x_2,x_3)))\\
		&-(-1)^{(|x_3|+|x_5|)(\alpha+|x_1|+|x_2|)+|x_5|(|x_3|+|x_4|)}\Phi(x_5,x_3)(\mathcal{D}_p(\Omega(x_1,x_2,x_4)))\\
		&+\Phi(\mathcal{D}_q(x_1),x_2)(\Omega(x_3,x_4,x_5))+ (-1)^{\alpha|x_1|}\Phi(x_1,\mathcal{D}_q(x_2))(\Omega(x_3,x_4,x_5))\\ &-(-1)^{(|x_1|+|x_2|)(|x_3|+|x_4|)}\Phi(\mathcal{D}_q(x_3),x_4)(\Omega(x_1,x_2,x_5))\\&-(-1)^{(|x_1|+|x_2|)(|x_3|+|x_5|)+|x_5|(\alpha+|x_3|+|x_4|)}\Phi(x_5,\mathcal{D}_q(x_3))(\Omega(x_1,x_2,x_4))\\
		&-(-1)^{(|x_3|+|x_4|)(|x_1|+|x_2|)+\alpha|x_3|}\Phi(x_3,\mathcal{D}_q(x_4))(\Omega(x_1,x_2,x_5))\\&-(-1)^{(|x_4|+|x_5|)(|x_1|+|x_2|+|x_3|)}\Phi(\mathcal{D}_q(x_4),x_5)(\Omega(x_1,x_2,x_3))\\&-(-1)^{(|x_4|+|x_5|)(|x_1|+|x_2|+|x_3|)+\alpha|x_4|}\Phi(x_4,\mathcal{D}_q(x_5))(\Omega(x_1,x_2,x_3))\\&-(-1)^{(|x_1|+|x_2|)(|x_3|+|x_5|)+|x_5|(|x_3|+|x_4|)}\Phi((\mathcal{D}_q(x_5),x_3)\Omega(x_1,x_2,x_4)
		\\&=-(\mathcal{D}_p\Phi(x_1,x_2)-(-1)^{\alpha(|x_1|+|x_2|)}\Phi(x_1,x_2)\mathcal{D}_p-\Phi(\mathcal{D}_q(x_1),x_2)\\&-(-1)^{\alpha|x_1|}\Phi(x_1,\mathcal{D}_q(x_2)))(\Omega(x_3,x_4,x_5))+(-1)^{(|x_1|+|x_2|)(|x_3|+|x_4|)}(\mathcal{D}_p\Phi(x_3,x_4)\\&-(-1)^{\alpha(|x_3|+|x_4|)}\Phi(x_3,x_4)\mathcal{D}_p-\Phi(\mathcal{D}_q(x_3),x_4)-(-1)^{\alpha|x_3|}\Phi(x_3,\mathcal{D}_q(x_4)))(\Omega(x_1,x_2,x_5))\\&
		+(-1)^{(|x_1|+|x_2|+|x_3|)(|x_4|+|x_5|)}(\mathcal{D}_p\Phi(x_4,x_5)-(-1)^{\alpha(|x_4|+|x_5|)}\Phi(x_4,x_5)\mathcal{D}_p-\Phi(\mathcal{D}_q(x_4),x_5)\\&-(-1)^{\alpha|x_4|}\Phi(x_4,\mathcal{D}_q(x_5)))(\Omega(x_1,x_2,x_3))
		+(-1)^{(|x_1|+|x_2|)(|x_3|+|x_5|)+|x_5|(|x_3|+|x_4|)}(\mathcal{D}_p\Phi(x_5,x_3)\\&-(-1)^{\alpha(|x_3|+|x_5|)}\Phi(x_5,x_3)\mathcal{D}_p-\Phi(\mathcal{D}_q(x_5),x_3)-(-1)^{\alpha|x_5|}\Phi(x_5,\mathcal{D}_q(x_3)))(\Omega(x_1,x_2,x_4))\\
		&=0.
		\end{split}
\end{equation*}
\end{proof}

\begin{definition}\label{d31}
	Let $(\Phi; \mathcal{P})$ be a representation of $\mathcal{Q}$. A pair $(\mathcal{D}_p,\mathcal{D}_q)\in Der(\mathcal{P})\times Der(\mathcal{Q})$ is called compatible (with respect to $\Phi$) if Eq \ref{e33} holds.
\end{definition}
Now we are ready to construct a Lie superalgebra and its representation on the first cohomology group. Set 
\begin{equation}\label{e321}
	\mathcal{T}_\Phi=\{(\mathcal{D}_p,\mathcal{D}_q)\in Der(\mathcal{P})\times Der(\mathcal{Q})|(\mathcal{D}_p,\mathcal{D}_q)~ is~ compatible~ with ~respect~ to~ \Phi\}.
\end{equation}
We have the following.
\begin{lemma}
	There is an even linear map $\Psi:\mathcal{T}_\Phi\rightarrow gl(\mathbb{H}^1(\mathcal{Q};\mathcal{P}))$ given by 
	\begin{equation}\label{e322}
		\Psi(\mathcal{D}_p,\mathcal{D}_q)([\Phi])=[Ob^\Omega _{(\mathcal{D}_p,\mathcal{D}_q)}]~~~~~for~\Omega\in (\mathbb{Z}^1(\mathcal{Q};\mathcal{P}))_{\overline{0}},
	\end{equation}
	where $[Ob^\Omega _{(\mathcal{D}_p,\mathcal{D}_q)}]$ is given by Eq \ref{e32}.
\end{lemma}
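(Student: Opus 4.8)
The plan is to verify three things: that $\Psi(\mathcal{D}_p,\mathcal{D}_q)$ is a well-defined operator on $\mathbb{H}^1(\mathcal{Q};\mathcal{P})$, that it is linear in the class, and that $\Psi$ itself is an even linear map in the argument $(\mathcal{D}_p,\mathcal{D}_q)$. By Lemma \ref{l31} the cochain $Ob^\Omega_{(\mathcal{D}_p,\mathcal{D}_q)}$ is a $2$-cocycle whenever $\Omega$ is, so the right-hand side of Eq \ref{e322} is a genuine cohomology class; the real content is that this class depends only on $[\Omega]$. Since $\Omega\mapsto Ob^\Omega_{(\mathcal{D}_p,\mathcal{D}_q)}$ is visibly $\mathbb{F}$-linear by Eq \ref{e31}, it suffices to show that an (even) coboundary is carried to a coboundary, for then two cohomologous cocycles, differing by some $\delta_\Phi\xi$, have images differing by a coboundary.

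So suppose $\Omega=\delta_\Phi\xi$ with $\xi\in(\mathbb{C}^0(\mathcal{Q};\mathcal{P}))_{\overline 0}$. I claim
$$Ob^{\delta_\Phi\xi}_{(\mathcal{D}_p,\mathcal{D}_q)}=\delta_\Phi\eta,\qquad \eta:=\mathcal{D}_p\circ\xi-\xi\circ\mathcal{D}_q,$$
where $\eta\in\mathbb{C}^0(\mathcal{Q};\mathcal{P})$ has degree $\alpha=|\mathcal{D}_p|=|\mathcal{D}_q|$. Conceptually, $\eta$ is exactly the value of the operator $Ob$ applied to the $0$-cochain $\xi$ (one insertion slot for $\mathcal{D}_q$), and the identity is the $0$-cochain instance of the general principle that $Ob$ commutes with the coboundary operator $\delta_\Phi$. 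Granting it, $Ob^\Omega_{(\mathcal{D}_p,\mathcal{D}_q)}$ is a coboundary and $\Psi(\mathcal{D}_p,\mathcal{D}_q)$ descends to $\mathbb{H}^1(\mathcal{Q};\mathcal{P})$.

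To prove the claimed identity I would expand both sides via Eq \ref{e25}. On the left I first write $\Omega=\delta_\Phi\xi$ explicitly, then apply the four-term definition Eq \ref{e32}, substituting $\mathcal{D}_q x,\mathcal{D}_q y,\mathcal{D}_q z$ into the three slots and applying $\mathcal{D}_p$ to all of $\Omega(x,y,z)$. On the right I expand $(\delta_\Phi\eta)(x,y,z)$ by Eq \ref{e25} with $|f|=\alpha$, and use that $\mathcal{D}_q$ is a superderivation, so $\xi(\mathcal{D}_q[x,y,z])$ splits into the three Leibniz terms $\xi([\mathcal{D}_q x,y,z])$, $(-1)^{\alpha|x|}\xi([x,\mathcal{D}_q y,z])$, and $(-1)^{\alpha(|x|+|y|)}\xi([x,y,\mathcal{D}_q z])$. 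The terms in which $\xi$ is evaluated on a bracket then cancel directly against their counterparts coming from the $\Omega(\mathcal{D}_q x,y,z)$-type summands. The surviving terms have the shape $\mathcal{D}_p\Phi(\cdot,\cdot)\xi(\cdot)$, $\Phi(\cdot,\cdot)\mathcal{D}_p\xi(\cdot)$, and $\Phi(\mathcal{D}_q\,\cdot,\cdot)\xi(\cdot)$, and these are reconciled precisely by the compatibility hypothesis Eq \ref{e33}, which rewrites $\mathcal{D}_p\Phi(x,y)-(-1)^{\alpha(|x|+|y|)}\Phi(x,y)\mathcal{D}_p$ as $\Phi(\mathcal{D}_q x,y)+(-1)^{\alpha|x|}\Phi(x,\mathcal{D}_q y)$ applied to $\xi(z)$, and similarly in the other two bracket positions.

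Once the identity is in hand, linearity of $\Psi(\mathcal{D}_p,\mathcal{D}_q)$ on $\mathbb{H}^1(\mathcal{Q};\mathcal{P})$ is immediate from the linearity of $\Omega\mapsto Ob^\Omega_{(\mathcal{D}_p,\mathcal{D}_q)}$ in Eq \ref{e31}, and Eq \ref{e31} likewise shows that $(\mathcal{D}_p,\mathcal{D}_q)\mapsto Ob^\Omega_{(\mathcal{D}_p,\mathcal{D}_q)}$ is linear in each argument, so $\Psi$ is linear; since $|Ob^\Omega_{(\mathcal{D}_p,\mathcal{D}_q)}|=\alpha=|(\mathcal{D}_p,\mathcal{D}_q)|$ for even $\Omega$, the map $\Psi$ preserves $\mathbb{Z}_2$-degree, i.e. it is even. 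I expect the main obstacle to be the sign bookkeeping in the central identity $Ob^{\delta_\Phi\xi}_{(\mathcal{D}_p,\mathcal{D}_q)}=\delta_\Phi\eta$: one must track the Koszul signs through Eq \ref{e25} carefully enough that the bracket terms cancel in pairs and the remaining $\Phi$-terms assemble into exactly the left-hand side of the compatibility relation Eq \ref{e33}.
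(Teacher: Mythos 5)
Your proposal is correct and follows essentially the same route as the paper: the paper likewise reduces well-definedness to showing that a coboundary $\delta_\Phi\lambda$ is sent to the coboundary $\delta_\Phi(\mathcal{D}_p\lambda-\lambda\mathcal{D}_q)$, expanding via Eq \ref{e25}, splitting $\lambda(\mathcal{D}_q([x,y,z]_\mathcal{Q}))$ by the superderivation property, and absorbing the surviving $\Phi$-terms with the compatibility relation Eq \ref{e33}. Your cochain $\eta=\mathcal{D}_p\circ\xi-\xi\circ\mathcal{D}_q$ is exactly the paper's primitive, and your additional remarks on linearity and evenness of $\Psi$ are routine and consistent with the paper's (implicit) treatment.
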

\begin{proof}
	Since $(\mathcal{D}_p,\mathcal{D}_q)$ is compatible with respect to $\Phi$, in Lemma \ref{l31} it is proved that $Ob^\Omega _{(\mathcal{D}_p,\mathcal{D}_q)}$ is a 2-cocycle whenever $\Omega$ is a 2-cocycle. So it is sufficient to show that if $\delta_{\Phi} \lambda$ is a 2-coboundary then $\Phi(\mathcal{D}_p,\mathcal{D}_q)(\delta_{\Phi} \lambda)=0$ which implies that $\Psi$ is well-defined. Here $|\lambda|=0$ and $|\mathcal{D}_p|=|\mathcal{D}_q|=\alpha$.
	\begin{equation*}
	\begin{split}
			&(\Psi(\mathcal{D}_p,\mathcal{D}_q)(\delta_{\Phi} \lambda))(x,y,z)\\
			&=\mathcal{D}_p(\delta_{\Phi} \lambda)(x,y,z)-(\delta_{\Phi} \lambda)(\mathcal{D}_q(x),y,z)-(-1)^{\alpha|x|}(\delta_{\Phi} \lambda)(x,\mathcal{D}_q(y),z)\\
			&~~~~~~-(-1)^{\alpha(|x|+|y|)}(\delta_{\Phi} \lambda)(x,y,\mathcal{D}_q(z))\\
			&=\mathcal{D}_p(-\lambda([x,y,z]_\mathcal{Q})+\Phi(x,y)(\lambda(z))+(-1)^{|x|(|y|+|z|)}\Phi(y,z)(\lambda(x))\\
			&~~~~~~+(-1)^{|z|(|x|+|y|)}\Phi(z,x)(\lambda(y)))-(-\lambda([\mathcal{D}_q(x),y,z]_\mathcal{Q})+\Phi(\mathcal{D}_q(x),y)(\lambda(z))\\
			&~~~~~~+(-1)^{(\alpha+|x|)(|y|+|z|)}\Phi(y,z)(\lambda(\mathcal{D}_q(x)))+(-1)^{|z|(|x|+|y|+\alpha)}\Phi(z,\mathcal{D}_q(x))(\lambda(y)))\\
			&~~~~~~+(-1)^{\alpha|x|}(-\lambda([x,\mathcal{D}_q(y),z]_\mathcal{Q})+\Phi(x,\mathcal{D}_q(y))(\lambda(z))+(-1)^{|x|(\alpha+|y|+|z|)}\Phi(\mathcal{D}_q(y),z)(\lambda(x))\\
			&~~~~~~+(-1)^{|z|(|x|+|y|+\alpha)}\Phi(z,x)(\lambda(\mathcal{D}_q(y))))-(-1)^{\alpha(|x|+|y|)} (-\lambda([x,y,\mathcal{D}_q(z)]_\mathcal{Q})+\Phi(x,y)(\lambda(\mathcal{D}_q(z)))\\
			&~~~~~~+(-1)^{|x|(\alpha+|y|+|z|)}\Phi(y,\mathcal{D}_q(z))(\lambda(x))+(-1)^{(|z|+\alpha)(|x|+|y|)}\Phi(\mathcal{D}_q(z),x)(\lambda(y))).
	\end{split}
	\end{equation*}
Since $\mathcal{D}_q$ is a superderivation, 
\begin{equation*}
	\begin{split}
		&\lambda([\mathcal{D}_q(x),y,z]_{\mathcal{Q}})+(-1)^{\alpha|x|}\lambda([x,\mathcal{D}_q(y),z]_{\mathcal{Q}}))+(-1)^{\alpha(|x|+|y|)}  \lambda([x,y,\mathcal{D}_q(z)]_{\mathcal{Q}})\\&=\lambda(\mathcal{D}_q([x,y,z]_{\mathcal{Q}})).
	\end{split}
\end{equation*}
Then we have 
\begin{equation*}
	\begin{split}
		&(\Psi(\mathcal{D}_p,\mathcal{D}_q)(\delta_{\Phi} \lambda))(x,y,z)\\
		&=(\mathcal{D}_p\Phi(x,y)(\lambda(z))-\Phi(\mathcal{D}_q(x),y)(\lambda(z))-(-1)^{\alpha|x|}\Phi(x,\mathcal{D}_q(y))(\lambda(z)))\\
		&+(-1)^{\alpha(|x|+|y|)} (\mathcal{D}_p\Phi(y,z)(\lambda(x))-\Phi(\mathcal{D}_q(y),z)(\lambda(x))-(-1)^{\alpha|y|}\Phi(y,\mathcal{D}_q(z))(\lambda(x)))\\
		&+(-1)^{|z|(|x|+|y|)} (\mathcal{D}_p\Phi(z,x)(\lambda(y))-\Phi(\mathcal{D}_q(z),x)(\lambda(y))-(-1)^{\alpha|z|}\Phi(x,\mathcal{D}_q(x))(\lambda(y)))\\
		&-(-1)^{(\alpha+|x|)(|y|+|z|)}\Phi(y,z)(\lambda(\mathcal{D}_q(x)))+(-1)^{\alpha|x|+|z|(|x|+|y|+\alpha)}\Phi(z,x)(\lambda(\mathcal{D}_q(y)))\\
		&-(-1)^{\alpha(|x|+|y|)} \Phi(x,y)(\lambda(\mathcal{D}_q(z)))-\mathcal{D}_p(\lambda([x,y,z]_{\mathcal{Q}}))+\lambda(\mathcal{D}_q([x,y,z]_{\mathcal{Q}})).
	\end{split}
\end{equation*}
By using Eq \ref{e33}, we have
\begin{equation*}
	\begin{split}
		&\mathcal{D}_p\Phi(x,y)(\lambda(z))-\Phi(\mathcal{D}_q(x),y)(\lambda(z))-(-1)^{\alpha|x|}\Phi(x,\mathcal{D}_q(y))(\lambda(z))\\&=(-1)^{\alpha(|x|+|y|)}\Phi(x,y)\mathcal{D}_p(\lambda(z)),
	\end{split}
\end{equation*}
\begin{equation*}
	\begin{split}
		&(-1)^{|x|(|y|+|z|)}(\mathcal{D}_p\Phi(y,z)(\lambda(x))-\Phi(\mathcal{D}_q(y),z)(\lambda(x))-(-1)^{\alpha||y|}\Phi(y,\mathcal{D}_q(z))(\lambda(x)))\\&=(-1)^{|x|(|y|+|z|)+\alpha(|y|+|z|)}\Phi(y,z)\mathcal{D}_p(\lambda(x)),
		\end{split}\end{equation*}

\begin{equation*}
	\begin{split}
		&(-1)^{|z|(|x|+|y|)}(\mathcal{D}_p\Phi(z,x)(\lambda(y))-\Phi(\mathcal{D}_q(z),x)(\lambda(y))-(-1)^{\alpha|z|}\Phi(z,\mathcal{D}_q(x))(\lambda(y)))\\&=(-1)^{|z|(|x|+|y|)+\alpha(|z|+|x|)}\Phi(z,x)\mathcal{D}_p(\lambda(y)).
		\end{split}
\end{equation*}

\begin{equation}
	\begin{split}
		&(\Psi(\mathcal{D}_p,\mathcal{D}_q)(\delta_{\Phi} \lambda))(x,y,z)\\
		&=(-1)^{\alpha(|x|+|y|)}\Phi(x,y)\mathcal{D}_p(\lambda(z))+(-1)^{(\alpha+|x|)(|y|+|z|)}\Phi(y,z)\mathcal{D}_p(\lambda(x))\\&+(-1)^{|z|(|x|+|y|)+\alpha(|z|+|x|)}\Phi(z,x)\mathcal{D}_p(\lambda(y))-(-1)^{(\alpha+|x|)(|y|+|z|)}\Phi(y,z)(\lambda(\mathcal{D}_q(x)))\\&-(-1)^{|z|(|x|+|y|)+\alpha(|z|+|x|)}\Phi(z,x)(\mathcal{D}_q(\lambda(y)))-(-1)^{\alpha(|x|+|y|)} \Phi(x,y)(\mathcal{D}_q(\lambda(z)))\\
		& -\mathcal{D}_p(\lambda([x,y,z]_{\mathcal{Q}}))+\lambda(\mathcal{D}_q([x,y,z]_{\mathcal{Q}}))\\
		&=\delta_{\Phi}(\mathcal{D}_p \lambda-\lambda \mathcal{D}_q)(x,y,z),
	\end{split}
\end{equation}
which implies that $[Ob^{\Omega,1} _{(\mathcal{D}_p,\mathcal{D}_q)}]=[Ob^{\Omega,2} _{(\mathcal{D}_p,\mathcal{D}_q)}]\in \mathbb{H}^1(\mathcal{Q},\mathcal{P})$ as required.
\end{proof}

Below is the main result of this section.
\begin{theorem}
	Keep notation as above. For any representation $(\Phi;\mathcal{P})$ of $\mathcal{Q}$, $\mathcal{T}_\Phi$ is a Lie subsuperalgebra of $Der(\mathcal{P})\times Der(\mathcal{Q})$ and the map $\Psi$ given by Eq \ref{e322} is a Lie superalgebra homomorphism.
\end{theorem}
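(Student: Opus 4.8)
The plan is to prove the two assertions separately: that $\mathcal{T}_\Phi$ is closed under the super-bracket of $Der(\mathcal{P})\times Der(\mathcal{Q})$, and that $\Psi$ respects this bracket. Recall that for any superalgebra the space of superderivations is a Lie superalgebra under the super-commutator $[\mathcal{D},\mathcal{D}']=\mathcal{D}\mathcal{D}'-(-1)^{|\mathcal{D}||\mathcal{D}'|}\mathcal{D}'\mathcal{D}$, so $Der(\mathcal{P})\times Der(\mathcal{Q})$ is a Lie superalgebra with the componentwise bracket $[(\mathcal{D}_p,\mathcal{D}_q),(\mathcal{D}'_p,\mathcal{D}'_q)]=([\mathcal{D}_p,\mathcal{D}'_p],[\mathcal{D}_q,\mathcal{D}'_q])$, and that the preceding lemma already shows $\Psi$ is an even, well-defined linear map into $gl(\mathbb{H}^1(\mathcal{Q};\mathcal{P}))$. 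Hence only the two bracket-compatibility statements remain.

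For the first assertion I would first note that, since the compatibility condition Eq \ref{e33} is linear in the pair $(\mathcal{D}_p,\mathcal{D}_q)$ for each fixed parity $\alpha$, the compatible pairs of degree $\alpha$ form a linear subspace, so $\mathcal{T}_\Phi$ is a graded subspace of $Der(\mathcal{P})\times Der(\mathcal{Q})$. The substance is closure under the bracket: given compatible pairs $(\mathcal{D}_p,\mathcal{D}_q)$ and $(\mathcal{D}'_p,\mathcal{D}'_q)$ of degrees $\alpha,\beta$, I must verify that $([\mathcal{D}_p,\mathcal{D}'_p],[\mathcal{D}_q,\mathcal{D}'_q])$ satisfies Eq \ref{e33} with degree $\alpha+\beta$. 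I would expand $[\mathcal{D}_p,\mathcal{D}'_p]\Phi(x,y)=\mathcal{D}_p\mathcal{D}'_p\Phi(x,y)-(-1)^{\alpha\beta}\mathcal{D}'_p\mathcal{D}_p\Phi(x,y)$ and apply Eq \ref{e33}, first for the inner derivation and then for the outer one, to each summand (and symmetrically). The purely inner terms reassemble into $(-1)^{(\alpha+\beta)(|x|+|y|)}\Phi(x,y)[\mathcal{D}_p,\mathcal{D}'_p]$; the terms in which a single derivation strikes an argument reassemble, using that $[\mathcal{D}_q,\mathcal{D}'_q]$ is again a superderivation of $\mathcal{Q}$, into $\Phi([\mathcal{D}_q,\mathcal{D}'_q]x,y)+(-1)^{(\alpha+\beta)|x|}\Phi(x,[\mathcal{D}_q,\mathcal{D}'_q]y)$; and the remaining doubly-struck terms cancel in the super-commutator.

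For the second assertion I would pass to the cochain level. Writing $\mathfrak{D}_{(\mathcal{D}_p,\mathcal{D}_q)}$ for the degree-$\alpha$ operator on cochains given by postcomposition with $\mathcal{D}_p$ minus the slotwise insertion of $\mathcal{D}_q$, so that $\mathfrak{D}_{(\mathcal{D}_p,\mathcal{D}_q)}\Omega=Ob^\Omega_{(\mathcal{D}_p,\mathcal{D}_q)}$ by Eq \ref{e32}, Lemma \ref{l31} together with the preceding lemma shows that $\mathfrak{D}_{(\mathcal{D}_p,\mathcal{D}_q)}$ commutes with $\delta_\Phi$ and so induces exactly $\Psi(\mathcal{D}_p,\mathcal{D}_q)$ on $\mathbb{H}^1(\mathcal{Q};\mathcal{P})$. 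It therefore suffices to identify, modulo coboundaries, the super-commutator $[\mathfrak{D}_{(\mathcal{D}_p,\mathcal{D}_q)},\mathfrak{D}_{(\mathcal{D}'_p,\mathcal{D}'_q)}]$ with $\mathfrak{D}_{([\mathcal{D}_p,\mathcal{D}'_p],[\mathcal{D}_q,\mathcal{D}'_q])}$. Splitting each $\mathfrak{D}$ into its postcomposition part and its insertion part, the super-commutator of the postcomposition parts is postcomposition with $[\mathcal{D}_p,\mathcal{D}'_p]$; the insertion parts realize a representation of $Der(\mathcal{Q})$ on the tensor powers of $\mathcal{Q}$, so their super-commutator is the insertion of $[\mathcal{D}_q,\mathcal{D}'_q]$; and the mixed terms, one acting on the output slot and the other on disjoint input slots, cancel. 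Matching these three contributions against Eq \ref{e32} for the pair $([\mathcal{D}_p,\mathcal{D}'_p],[\mathcal{D}_q,\mathcal{D}'_q])$ yields the homomorphism property $\Psi([(\mathcal{D}_p,\mathcal{D}_q),(\mathcal{D}'_p,\mathcal{D}'_q)])=[\Psi(\mathcal{D}_p,\mathcal{D}_q),\Psi(\mathcal{D}'_p,\mathcal{D}'_q)]$ given by Eq \ref{e322}.

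I expect the main obstacle in both parts to be the Koszul sign bookkeeping. In the first part the delicate point is that the single-argument terms recombine with exactly the signs needed to present $[\mathcal{D}_q,\mathcal{D}'_q]$ as a superderivation; in the second part it is confirming that the mixed postcomposition/insertion contributions cancel and that the insertion super-commutator produces precisely the insertion of $[\mathcal{D}_q,\mathcal{D}'_q]$ up to coboundaries. These are the places where a single misplaced factor of $(-1)^{\alpha\beta}$ would break the identity, so I would organize the computation to track the parity of each derivation against the parities of all arguments it is transposed past.
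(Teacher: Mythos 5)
Your first assertion is handled exactly as in the paper: the paper also proves closure of $\mathcal{T}_\Phi$ by expanding the super-commutator $[\mathcal{D}_{p_1},\mathcal{D}_{p_2}]\Phi(x,y)-(-1)^{(\alpha_1+\alpha_2)(|x|+|y|)}\Phi(x,y)[\mathcal{D}_{p_1},\mathcal{D}_{p_2}]$ and applying Eq \ref{e33} twice, once for the inner and once for the outer derivation (Eqs \ref{e323}--\ref{e328}), which is precisely your plan. The real difference is the second assertion: the paper offers no proof that $\Psi$ is a Lie superalgebra homomorphism, writing only ``To prove second part refer \cite{xu2018}'' (a reference treating the ungraded $3$-Lie case), whereas you supply an actual argument, realizing $\Psi(\mathcal{D}_p,\mathcal{D}_q)$ as induced by the cochain operator $\mathfrak{D}_{(\mathcal{D}_p,\mathcal{D}_q)}$ (postcomposition by $\mathcal{D}_p$ minus insertion of $\mathcal{D}_q$) and checking the bracket at the operator level. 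This route is sound and in fact cleaner than what ``graded Xu'' would give: with the correct Koszul conventions the identity $[\mathfrak{D}_{(\mathcal{D}_p,\mathcal{D}_q)},\mathfrak{D}_{(\mathcal{D}'_p,\mathcal{D}'_q)}]=\mathfrak{D}_{([\mathcal{D}_p,\mathcal{D}'_p],[\mathcal{D}_q,\mathcal{D}'_q])}$ holds on the nose on cochains, so the ``modulo coboundaries'' hedge is not even needed. One caveat, which you yourself flag as the danger point: Eq \ref{e32} as printed carries no factor $(-1)^{\alpha|\Omega|}$ in front of the insertion terms, because the paper only ever applies it to even $\Omega$. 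In your composition $\mathfrak{D}_{(\mathcal{D}_p,\mathcal{D}_q)}\bigl(\mathfrak{D}_{(\mathcal{D}'_p,\mathcal{D}'_q)}\Omega\bigr)$ the inner output has degree $\beta$, and if one inserts $\mathcal{D}_q$ into it using Eq \ref{e32} verbatim, the mixed postcomposition/insertion terms come with coefficient $-1+(-1)^{\alpha\beta}$ and fail to cancel when both pairs are odd (the doubly-struck terms are likewise off by $(-1)^{\alpha\beta}$). So your argument is correct precisely when the cochain action is defined with the Koszul factor $(-1)^{\alpha|f|}$ multiplying the insertion sum, i.e.\ as the natural hom-representation of $Der(\mathcal{P})\times Der(\mathcal{Q})$; that is also the convention under which your claim that the insertions realize a representation is true. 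With that single precision made explicit, your proposal establishes strictly more than the paper's own proof does.
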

\begin{proof}
	Take $|\mathcal{D}_{p_1}|=\alpha_1$ and $|\mathcal{D}_{p_2}|=\alpha_2$.
	\begin{equation}\label{e323}
		\begin{split}
			&(\mathcal{D}_{p_1}\mathcal{D}_{p_2}-(-1)^{\alpha_1\alpha_2}\mathcal{D}_{p_2}\mathcal{D}_{p_1})\Phi (x,y)- (-1)^{(\alpha_1+\alpha_2)(|x|+|y|)} \Phi (x,y)(\mathcal{D}_{p_1}\mathcal{D}_{p_2}-(-1)^{\alpha_1\alpha_2}\mathcal{D}_{p_2}\mathcal{D}_{p_1})\\
			&=\underbrace{\mathcal{D}_{p_1}((-1)^{\alpha_2 (|x|+|y|)}\Phi(x,y)\mathcal{D}_{p_2}+\Phi (\mathcal{D}_{p_2}(x),y)+(-1)^{\alpha_2|x|}\Phi (x,\mathcal{D}_{p_2}(y)))}_{I_1}\\
			&-\underbrace{(-1)^{\alpha_1\alpha_2}\mathcal{D}_{p_2}((-1)^{\alpha_1 (|x|+|y|)}\Phi(x,y)\mathcal{D}_{p_1}+\Phi (\mathcal{D}_{p_1}(x),y)+(-1)^{\alpha_1|x|}\Phi (x,\mathcal{D}_{p_1}(y)))}_{I_2}\\
			&-(-1)^{(\alpha_1+\alpha_2) (|x|+|y|)}\Phi(x,y)\mathcal{D}_{p_1}\mathcal{D}_{p_2} (-1)^{(\alpha_1+\alpha_2) (x+y)+\alpha_1\alpha_2}\Phi(x,y)\mathcal{D}_{p_2}\mathcal{D}_{p_1},
		\end{split}
	\end{equation}
where
\begin{equation*}\label{e324}
	I_1=\mathcal{D}_{p_1}((-1)^{\alpha_2 (|x|+|y|)}\Phi(x,y)\mathcal{D}_{p_2}+\Phi (\mathcal{D}_{p_2}(x),y)+(-1)^{\alpha_2|x|}\Phi (x,\mathcal{D}_{p_2}(y))),      
\end{equation*}
\begin{equation*}\label{e325}
	I_2=(-1)^{\alpha_1\alpha_2}\mathcal{D}_{p_2}((-1)^{\alpha_1 (|x|+|y|)}\Phi(x,y)\mathcal{D}_{p_1}+\Phi (\mathcal{D}_{p_1}(x),y)+(-1)^{\alpha_1 |x|}\Phi (x,\mathcal{D}_{p_1}(y))).
\end{equation*}
By Eq \ref{e33}, we get
\begin{equation}\label{e326}
\begin{split}
		&I_1=(-1)^{\alpha_2(|x|+|y|)}((-1)^{\alpha_1(|x|+|y|)}	\Phi(x,y)\mathcal{D}_{p_1}+\Phi (\mathcal{D}_{p_1}(x),y)\\
		&+(-1)^{\alpha_1|x|}\Phi (x,\mathcal{D}_{p_1}(y)))\mathcal{D}_{p_2}+((-1)^{\alpha_1 (|x|+|y|+\alpha_2)}\Phi(\mathcal{D}_{p_2}(x),y)\mathcal{D}_{p_1}\\&+\Phi (\mathcal{D}_{p_1}\mathcal{D}_{p_2}(x),y)+(-1)^{\alpha_1(|\alpha_2|+|x|)}\Phi (\mathcal{D}_{p_2}(x),\mathcal{D}_{p_1}(y)))\\&+(-1)^{\alpha_2|x|}((-1)^{\alpha_1 (|x|+|y|+\alpha_2)}\Phi(x,\mathcal{D}_{p_2}(y))\mathcal{D}_{p_1}+\Phi (\mathcal{D}_{p_1}(x),\mathcal{D}_{p_2}(y)) \\&+(-1)^{\alpha_1|x|}\Phi (x,\mathcal{D}_{p_1}\mathcal{D}_{p_2}(y))),
\end{split}
\end{equation}
and
\begin{equation}\label{e327}
	\begin{split}
		&I_2=-(-1)^{\alpha_1(\alpha_2+|x|+|y|)}((-1)^{\alpha_2(|x|+|y|)}	\Phi(x,y)\mathcal{D}_{p_2}+\Phi (\mathcal{D}_{p_2}(x),y)\\&+(-1)^{\alpha_2|x|}\Phi (x,\mathcal{D}_{p_2}(y)))\mathcal{D}_{p_1}-(-1)^{\alpha_1\alpha_2}((-1)^{\alpha_2 (|x|+|y|+\alpha_1)}\Phi(\mathcal{D}_{p_1}(x),y)\mathcal{D}_{p_2}\\&+\Phi (\mathcal{D}_{p_2}\mathcal{D}_{p_1}(x),y)+(-1)^{\alpha_2(\alpha_1+|x|)}\Phi (\mathcal{D}_{p_1}(x),\mathcal{D}_{p_2}(y)))\\&-(-1)^{\alpha_1(|x|+\alpha_2)}((-1)^{\alpha_2 (|x|+|y|+\alpha_1)}\Phi(x,\mathcal{D}_{p_1}(y))\mathcal{D}_{p_2}+\Phi (\mathcal{D}_{p_2}(x),\mathcal{D}_{p_1}(y)) \\&+(-1)^{\alpha_2|x|}\Phi (x,\mathcal{D}_{p_2}\mathcal{D}_{p_1}(y))).
	\end{split}
\end{equation}
Then inserting Eqs \ref{e326} and \ref{e327} into Eq \ref{e323}, we get
\begin{equation}\label{e328}
	\begin{split}
		&(\mathcal{D}_{p_1}\mathcal{D}_{p_2}-(-1)^{\alpha_1\alpha_2}\mathcal{D}_{p_2}\mathcal{D}_{p_1})\Phi (x,y)- (-1)^{(\alpha_1+\alpha_2)(|x|+|y|)} \Phi (x,y)(\mathcal{D}_{p_1}\mathcal{D}_{p_2}\\
		&-(-1)^{\alpha_1\alpha_2}\mathcal{D}_{p_2}\mathcal{D}_{p_1})=\Phi((\mathcal{D}_{q_1}\mathcal{D}_{q_2}-(-1)^{\alpha_1\alpha_2|}\mathcal{D}_{q_2}\mathcal{D}_{q_1})(x),y)\\
		&+ (-1)^{|x|(\alpha_1+\alpha_2)}\Phi (x,(\mathcal{D}_{q_1}\mathcal{D}_{q_2}-(-1)^{\alpha_1\alpha_2}\mathcal{D}_{q_2}\mathcal{D}_{q_1})(y)),
	\end{split}
\end{equation}
which implies that $[(\mathcal{D}_{p_1},\mathcal{D}_{q_1}),(\mathcal{D}_{p_2},\mathcal{D}_{q_2})]$ is compatible by Definition \ref{d31}.\\
\noindent To prove second part refer \cite{xu2018}.
\end{proof}

\section{Abelian Extensions and Extensibility of superderivations}
In this section, we construct obstruction classes for extensibility of superderivations by Lemma \ref{l31} and also give a representation of $\mathcal{T}_{\Phi}$ in terms of extensibility of superderivations.

\begin{lemma}\label{l41}
	Keep notations as above. The cohomology class $[Ob^\Omega _{(\mathcal{D}_p,\mathcal{D}_q)}]\in \mathbb{H}^1(\mathcal{Q};\mathcal{P})$ does not depend on the choice of the section $\pi$.
\end{lemma}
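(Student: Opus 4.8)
The plan is to reduce the statement to two facts already available in the excerpt: the independence of the cohomology class $[\Omega]$ from the section (Corollary \ref{coro21}), and the coboundary identity for $\Psi$ computed in the preceding lemma. First I would fix two sections $s_1,s_2$ of $\pi$ and denote by $\Omega_1,\Omega_2$ the associated $2$-cocycles defined via Eq \ref{e29}. Setting $\lambda(x)=s_1(x)-s_2(x)$, Corollary \ref{coro21} supplies $\lambda\in(\mathbb{C}^0(\mathcal{Q};\mathcal{P}))_{\overline{0}}$ together with the identity $\Omega_1-\Omega_2=\delta_{\Phi}\lambda$. I would also record that the representation $\Phi$ of Eq \ref{e27} is itself independent of the chosen section, so that both obstruction cochains $Ob^{\Omega_1}_{(\mathcal{D}_p,\mathcal{D}_q)}$ and $Ob^{\Omega_2}_{(\mathcal{D}_p,\mathcal{D}_q)}$ are formed with respect to one and the same $\Phi$, and hence compared inside the same cohomology group.

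Next I would observe from the defining formula Eq \ref{e31} (equivalently Eq \ref{e32}) that the assignment $\Omega\mapsto Ob^{\Omega}_{(\mathcal{D}_p,\mathcal{D}_q)}$ is linear, since each of its four terms is linear in $\Omega$. Consequently
\[
Ob^{\Omega_1}_{(\mathcal{D}_p,\mathcal{D}_q)}-Ob^{\Omega_2}_{(\mathcal{D}_p,\mathcal{D}_q)}=Ob^{\Omega_1-\Omega_2}_{(\mathcal{D}_p,\mathcal{D}_q)}=Ob^{\delta_{\Phi}\lambda}_{(\mathcal{D}_p,\mathcal{D}_q)},
\]
and the whole problem is reduced to showing that $Ob^{\delta_{\Phi}\lambda}_{(\mathcal{D}_p,\mathcal{D}_q)}$ is a $2$-coboundary. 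The key step is to recognise this cochain as an image under $\Psi$: by Eq \ref{e322} we have $Ob^{\delta_{\Phi}\lambda}_{(\mathcal{D}_p,\mathcal{D}_q)}=\Psi(\mathcal{D}_p,\mathcal{D}_q)(\delta_{\Phi}\lambda)$, and in the proof of the preceding lemma (defining $\Psi$) it was computed, using the compatibility condition Eq \ref{e33}, that $\Psi(\mathcal{D}_p,\mathcal{D}_q)(\delta_{\Phi}\lambda)=\delta_{\Phi}(\mathcal{D}_p\lambda-\lambda\mathcal{D}_q)$. Since $\lambda$ is even while $\mathcal{D}_p,\mathcal{D}_q$ both have degree $\alpha$, the $0$-cochain $\mathcal{D}_p\lambda-\lambda\mathcal{D}_q$ has degree $\alpha$, so its coboundary is a genuine $2$-coboundary of the same degree as $Ob$. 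Therefore $[Ob^{\Omega_1}_{(\mathcal{D}_p,\mathcal{D}_q)}]=[Ob^{\Omega_2}_{(\mathcal{D}_p,\mathcal{D}_q)}]$ in $\mathbb{H}^1(\mathcal{Q};\mathcal{P})$, as required.

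I do not expect a real obstacle here, since the heavy computation has already been carried out for $\Psi$; the substance of this lemma is the bookkeeping that lets the earlier calculation be invoked verbatim. The points to verify carefully are that $(\mathcal{D}_p,\mathcal{D}_q)$ is assumed compatible (so Eq \ref{e33} is available), that $\lambda$ indeed lands in $(\mathbb{C}^0(\mathcal{Q};\mathcal{P}))_{\overline{0}}$ as guaranteed by Corollary \ref{coro21}, and that the degree conventions make $\Omega\mapsto Ob^{\Omega}_{(\mathcal{D}_p,\mathcal{D}_q)}$ genuinely linear across cocycles of the relevant parity. Once these are confirmed, no computation beyond the preceding lemma is needed, and the independence of $[Ob^{\Omega}_{(\mathcal{D}_p,\mathcal{D}_q)}]$ from the section follows.
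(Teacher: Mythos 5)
Your proof is correct, and it is organized differently from the paper's. The paper proves this lemma by a second full computation: it expands $Ob^{\Omega,1}_{(\mathcal{D}_p,\mathcal{D}_q)}-Ob^{\Omega,2}_{(\mathcal{D}_p,\mathcal{D}_q)}$ into four blocks $I_1,\dots,I_4$, substitutes the explicit formula for $\Omega_1-\Omega_2=\delta_{\Phi}\lambda$ from Corollary \ref{coro21} into each block, applies the compatibility identity Eq \ref{e33} three times, and collects the result into $\delta_{\Phi}(\mathcal{D}_p\lambda-\lambda\mathcal{D}_q)$. You instead observe that $\Omega\mapsto Ob^{\Omega}_{(\mathcal{D}_p,\mathcal{D}_q)}$ is linear in $\Omega$ (clear from Eq \ref{e31}, since the signs in Eq \ref{e32} involve only $\alpha$, $|x|$, $|y|$ and not the cochain itself), so the difference equals $Ob^{\delta_{\Phi}\lambda}_{(\mathcal{D}_p,\mathcal{D}_q)}$, and this cochain was already shown to equal $\delta_{\Phi}(\mathcal{D}_p\lambda-\lambda\mathcal{D}_q)$ in the proof that $\Psi$ of Eq \ref{e322} is well defined; that computation was carried out for an arbitrary even $0$-cochain $\lambda$ under the compatibility hypothesis, so it applies verbatim to $\lambda=s_1-s_2$. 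In fact the paper's two computations (the one for $\Psi$ and the one for this lemma) are essentially the same calculation written twice, and this redundancy is exactly what your argument eliminates. What your route buys is economy and a clean structural statement: under compatibility, the Ob-construction is a linear map on cochains that sends cocycles to cocycles (Lemma \ref{l31}) and coboundaries to coboundaries, so it descends to cohomology, and section-independence then follows immediately from Corollary \ref{coro21}. What the paper's route buys is self-containedness, since its Section 4 argument does not require unwinding the mild abuse of notation around Eq \ref{e322}, where $\Psi$ is defined on classes but the supporting computation is performed on cochains. The checkpoints you flag --- compatibility of $(\mathcal{D}_p,\mathcal{D}_q)$, evenness of $\lambda$, and the fact that $\Phi$ of Eq \ref{e27} is section-independent so both obstruction cochains live in the same complex --- are precisely the hypotheses the paper uses implicitly, so nothing is missing from your reduction.
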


\begin{proof}Let $s_1$ and $s_2$ be sections of $\pi$ and $\Omega_1,\Omega_2$ be defined by Eq \ref{e29} while $Ob^{\Omega,1} _{(\mathcal{D}_p,\mathcal{D}_q)},~Ob^{\Omega,2} _{(\mathcal{D}_p,\mathcal{D}_q)}$ are defined by Eq \ref{e32} with respect to $\Omega_1,\Omega_2$. Then
	\begin{equation}\label{e41}
		\begin{split}
				&Ob^{\Omega,1} _{(\mathcal{D}_p,\mathcal{D}_q)}(x,y,z)-	Ob^{\Omega,2} _{(\mathcal{D}_p,\mathcal{D}_q)}(x,y,z)\\
				&=\mathcal{D}_p(\Omega_1(x,y,z))-\Omega_1(\mathcal{D}_q(x),y,z)-(-1)^{\alpha |x|}\Omega_1(x,\mathcal{D}_q(y),z)-(-1)^{\alpha (|x|+|y|)}\Omega_1(x,y,\mathcal{D}_q(z))\\
				&~~~~~-\mathcal{D}_p(\Omega_2(x,y,z))+\Omega_2(\mathcal{D}_q(x),y,z)+(-1)^{\alpha |x|}\Omega_2(x,\mathcal{D}_q(y),z)+(-1)^{\alpha (|x|+|y|)}\Omega_2(x,y,\mathcal{D}_q(z))\\
				&=\underbrace{\mathcal{D}_p(\Omega_1(x,y,z)-\Omega_2(x,y,z))}_{I_1}-\underbrace{(\Omega_1(\mathcal{D}_q(x),y,z)-\Omega_2(\mathcal{D}_q(x),y,z))}_{I_2}\\
				&~~~~~-(-1)^{\alpha |x|}\underbrace{(\Omega_1(x,\mathcal{D}_q(y),z)-\Omega_2(x,\mathcal{D}_q(y),z))}_{I_3}\\
				&~~~~~-(-1)^{\alpha (|x|+|y|)}\underbrace{(\Omega_1(x,y,\mathcal{D}_q(z))-\Omega_2(x,y,\mathcal{D}_q(z)))}_{I_4},
		\end{split}
	\end{equation}
for $x,y,z\in \mathcal{Q}$ and $|\mathcal{D}_p|=|\mathcal{D}_q|=\alpha$.
Define an even linear map $\lambda:\mathcal{Q}\rightarrow \mathcal{P}$ by $\lambda(x)=s_1(x)-s_2(x)$ where $x\in \mathcal{Q}$. From Corollary \ref{coro21}, we have 
\begin{equation}\label{e42}
	\begin{split}
		\Omega_1(x,y,z)-\Omega_2(x,y,z)=&-\lambda([x,y,z]_\mathcal{Q})+\Phi(x,y)(\lambda(z))+(-1)^{|x|(|y|+|z|)}\Phi(y,z)(\lambda(x))\\
		&~~~~~~~+(-1)^{|z|(|x|+|y|)}\Phi(z,x)(\lambda(y)),
	\end{split}
\end{equation}
for any $x,y,z\in \mathcal{Q}$. Therefore, we get 
\begin{equation*} \begin{split}
		I_1&=\mathcal{D}_p(\Omega_1(x,y,z)-\Omega_2(x,y,z))\\
		&=\mathcal{D}_p(-\lambda([x,y,z]_\mathcal{Q})+\Phi(x,y)(\lambda(z))+(-1)^{|x| (|y|+|z|)}\Phi(y,z)(\lambda(x))\\&+(-1)^{|z|(|x|+|y|)}\Phi(z,x)(\lambda(y))),
\end{split} \end{equation*}

\begin{equation*}
	\begin{split}
		I_2&=\Omega_1(\mathcal{D}_q(x),y,z)-\Omega_2(\mathcal{D}_q(x),y,z)\\
		&=-\lambda([\mathcal{D}_q(x),y,z]_\mathcal{Q})+\Phi(\mathcal{D}_q(x),y)(\lambda(z))+(-1)^{(\alpha+|x|) (|y|+|z|)}\Phi(y,z)(\lambda(\mathcal{D}_q(x)))\\
		&+(-1)^{|z| (\alpha+|x|+|y|)}\Phi(z,\mathcal{D}_q(x))(\lambda(y)),
		\end{split}
\end{equation*}

\begin{equation*}
	\begin{split}
		I_3&=\Omega_1(x,\mathcal{D}_q(y),z)-\Omega_2(x,\mathcal{D}_q(y),z)\\
		&=-\lambda([x,\mathcal{D}_q(y),z]_\mathcal{Q})+\Phi(x,\mathcal{D}_q(y))(\lambda(z))+(-1)^{|x|(\alpha+|y|+|z|)}\Phi(\mathcal{D}_q(y),z)(\lambda(x))\\
		&~~~~~+(-1)^{|z|(\alpha+|x|+|y|)}\Phi(z,x)(\lambda(\mathcal{D}_q(y))),
		\end{split}
\end{equation*} and
\begin{equation*}
	\begin{split}
			I_4&=\Omega_1(x,y,\mathcal{D}_q(z))-\Omega_2(x,y,\mathcal{D}_q(z))\\
		&=-\lambda([x,y,\mathcal{D}_q(z)]_\mathcal{Q})+\Phi(x,y)(\lambda(\mathcal{D}_q(z)))+(-1)^{|z| (\alpha+|x|+|y|)}\Phi(y,\mathcal{D}_q(z))(\lambda(x))\\
		&+(-1)^{(\alpha+|z|) (|x|+|y|)}\Phi(\mathcal{D}_q(z),x)(\lambda(y)).
		\end{split}
\end{equation*}

By $I_1,I_2,I_3,I_4$, and Eq \ref{e41}, we get
\begin{equation*}
	\begin{split}
		&Ob^{\Omega,1} _{(\mathcal{D}_p,\mathcal{D}_q)}(x,y,z)-	Ob^{\Omega,2} _{(\mathcal{D}_p,\mathcal{D}_q)}(x,y,z)\\
		&=(\mathcal{D}_p\Phi(x,y)(\lambda(z))-\Phi(\mathcal{D}_q(x),y)(\lambda(z))-(-1)^{\alpha |x|}\Phi(x,\mathcal{D}_q(y))(\lambda(z)))\\
		&~~~~~~+(-1)^{|x|(|y|+|z|)}(\mathcal{D}_p\Phi(y,z)(\lambda(x))-\Phi(\mathcal{D}_q(y),z)(\lambda(x))-(-1)^{\alpha |y|}\Phi(y,\mathcal{D}_q(z))(\lambda(x)))
		\\
		&~~~~~~+(-1)^{|z|(|x|+|y|)}(\mathcal{D}_p\Phi(z,x)(\lambda(y))-\Phi(\mathcal{D}_q(z),x)(\lambda(y))-(-1)^{\alpha|z|}\Phi(z,\mathcal{D}_q(x))(\lambda(y)))\\
		&~~~~~~-\mathcal{D}_p(\lambda([x,y,z]_\mathcal{Q}))-(-1)^{(\alpha+|x|)(|y|+|z|)}\Phi(y,z)\lambda(\mathcal{D}_q(x))\\
		&~~~~~~-(-1)^{\alpha|x|+|z|(\alpha+|x|+|y|)}\Phi(z,x)\lambda(\mathcal{D}_q(y))-(-1)^{\alpha(|x|+|y|)}\Phi(x,y)\lambda(\mathcal{D}_q(z))+\lambda(\mathcal{D}_q([x,y,z]_\mathcal{Q})),
	\end{split}
\end{equation*}
where $\mathcal{D}_q$ is a superderivation. Since $(\mathcal{D}_p,\mathcal{D}_q)$ is compatible, by Eq \ref{e33} it follows that
\begin{equation*}
	\begin{split}
		&(\mathcal{D}_p\Phi(x,y)(\lambda(z))-\Phi(\mathcal{D}_q(x),y)(\lambda(z))-(-1)^{\alpha |x|}\Phi(x,\mathcal{D}_q(y))(\lambda(z)))\\&=(-1)^{\alpha(|x|+|y|)}\Phi(x,y)(\mathcal{D}_p(\lambda(z))),
	\end{split}
\end{equation*}
\begin{equation*}
	\begin{split}
		(-1)^{|x|(|y|+|z|)}(\mathcal{D}_p\Phi(y,z)(\lambda(x))&-\Phi(\mathcal{D}_q(y),z)(\lambda(x))-(-1)^{\alpha |y|}\Phi(y,\mathcal{D}_q(z))(\lambda(x)))\\&=(-1)^{(|x|+\alpha)(|y|+|z|)}\Phi(y,z)(\mathcal{D}_p(\lambda(x))),
	\end{split}
\end{equation*}
and
\begin{equation*}
	\begin{split}
		(-1)^{|z|(|x|+|y|)}(\mathcal{D}_p\Phi(z,x)(\lambda(y))&-\Phi(\mathcal{D}_q(z),x)(\lambda(y))-(-1)^{\alpha |z|}\Phi(z,\mathcal{D}_q(x))(\lambda(y)))\\&=(-1)^{|z|(|x|+|y|+\alpha)+\alpha|x|}\Phi(z,x)\mathcal{D}_p(\lambda(y)).
	\end{split}
\end{equation*}
Therefore we obtain
\begin{equation*}
\begin{split}
	&Ob^{\Omega,1} _{(\mathcal{D}_p,\mathcal{D}_q)}(x,y,z)-	Ob^{\Omega,2} _{(\mathcal{D}_p,\mathcal{D}_q)}(x,y,z)\\
		&=(-1)^{\alpha(|x|+|y|)}\Phi(x,y)(\mathcal{D}_p(\lambda(z)))-\mathcal{D}_p(\lambda([x,y,z]_\mathcal{Q}))+(-1)^{(|x|+\alpha)(|y|+|z|)}\Phi(y,z)(\mathcal{D}_p(\lambda(x)))\\
		&~~~~~~+(-1)^{|z|(|x|+|y|+\alpha)+\alpha|x|}\Phi(z,x)(\mathcal{D}_p(\lambda(y)))+(-1)^{(\alpha+|x|)(|y|+|z|)}\Phi(y,z)(\lambda(\mathcal{D}_q(x)))\\
		&~~~~~~-(-1)^{|z|(\alpha+|x|+|y|)+\alpha|x|}\Phi(z,x)(\lambda(\mathcal{D}_q(y)))-(-1)^{\alpha(|x|+|y|)}\Phi(x,y)(\lambda(\mathcal{D}_q(z)))+\lambda(\mathcal{D}_q([x,y,z]_\mathcal{Q}))\\
		&=\delta_{\Phi}(\mathcal{D}_p \lambda-\lambda   \mathcal{D}_q)(x,y,z),
		\end{split}
\end{equation*}
which implies that $[Ob^{\Omega,1} _{(\mathcal{D}_p,\mathcal{D}_q)}]=[Ob^{\Omega,2} _{(\mathcal{D}_p,\mathcal{D}_q)}]\in \mathbb{H}^1(\mathcal{Q};\mathcal{P})$ as required.
\end{proof}
\par Now we will define extensibility of superderivations.
\begin{definition}
	Let $0\rightarrow \mathcal{P}\hookrightarrow \mathcal{L}\rightarrow \mathcal{Q}\rightarrow 0$ be an extension of $3$-Lie superalgebras with $[\mathcal{P},\mathcal{P},\mathcal{L}]=0$. A pair $(\mathcal{D}_p,\mathcal{D}_q)\in Der(\mathcal{P})\times Der(\mathcal{Q})$ is called extensible if there is a superderivation $\mathcal{D}_l\in Der(\mathcal{L})$ such that the following diagram:
	\begin{center}
		\begin{tikzpicture}[>=latex]\label{diagram}
			\node (A_{1}) at (0,0) {\(0\)};
			\node (A_{2}) at (2,0) {\(\mathcal{P}\)};
			\node (A_{3}) at (4,0) {\(\mathcal{L}\)};
			\node (A_{4}) at (6,0) {\(\mathcal{Q}\)};
			\node (A_{5}) at (8,0) {\(0\)};
			\node (B_{1}) at (0,-2) {\(0\)};
			\node (B_{2}) at (2,-2) {\(\mathcal{P}\)};
			\node (B_{3}) at (4,-2) {\(\mathcal{L}\)};
			\node (B_{4}) at (6,-2) {\(\mathcal{Q}\)};
			\node (B_{5}) at (8,-2) {\(0\)};
			\draw[->] (A_{1}) -- (A_{2});
			\draw[->] (A_{2}) -- (A_{3}) node[midway,above] {$i$};
			\draw[->] (A_{3}) -- (A_{4}) node[midway,above] {$\pi$};
			\draw[->] (A_{4}) -- (A_{5});
			\draw[->] (B_{1}) -- (B_{2});
			\draw[->] (B_{2}) -- (B_{3}) node[midway,above] {$i$}; 
			\draw[->] (B_{3}) -- (B_{4}) node[midway,above] {$\pi$};
			\draw[->] (B_{4}) -- (B_{5});
			\draw[->] (A_{2}) -- (B_{2}) node[midway,right] {$\mathcal{D}_p$};
			\draw[->] (A_{3}) -- (B_{3}) node[midway,right] {$\mathcal{D}_l$};
			\draw[->] (A_{4}) -- (B_{4}) node[midway,right] {$\mathcal{D}_q$};
		\end{tikzpicture}\\
	\end{center}

commutes, where $i:\mathcal{P}\rightarrow \mathcal{L}$ is the inclusion map.
\end{definition}
The following result means that extensibility implies compatibility.
\begin{proposition}\label{p42}
	Let $0\rightarrow \mathcal{P}\hookrightarrow \mathcal{L}\rightarrow \mathcal{Q}\rightarrow 0$ be an extension of $3$-Lie superalgebras with $[\mathcal{P},\mathcal{P},\mathcal{L}]=0$. If $(\mathcal{D}_p,\mathcal{D}_q)\in Der(\mathcal{P})\times Der(\mathcal{Q})$ is extensible, then $(\mathcal{D}_p,\mathcal{D}_q)$ is compatible with respect to $\Phi$ given by Eq \ref{e27}.
\end{proposition}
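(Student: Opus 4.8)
The plan is to work directly with an extending superderivation and to verify the compatibility identity Eq \ref{e33} after evaluating both of its sides on an arbitrary element $v\in\mathcal{P}$. By hypothesis $(\mathcal{D}_p,\mathcal{D}_q)$ is extensible, so there is $\mathcal{D}_l\in Der(\mathcal{L})$, necessarily homogeneous of degree $\alpha$, making the diagram commute; explicitly $\mathcal{D}_l(v)=\mathcal{D}_p(v)$ for all $v\in\mathcal{P}$ (commutativity of the left square) and $\pi\mathcal{D}_l=\mathcal{D}_q\pi$ (commutativity of the right square). Fixing the section $s$ of $\pi$ used to build $\Phi$ through Eq \ref{e27}, I would recall that $\Phi(x,y)(v)=[s(x),s(y),v]_{\mathcal{L}}$ lies in $\mathcal{P}$, so $\mathcal{D}_p$ acts on it exactly as $\mathcal{D}_l$ does.

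Next I would compute $\mathcal{D}_p(\Phi(x,y)(v))=\mathcal{D}_l([s(x),s(y),v]_{\mathcal{L}})$ and expand using the superderivation rule of Definition \ref{d22} for $\mathcal{D}_l$, keeping in mind that $s$ is even so $|s(x)|=|x|$ and $|s(y)|=|y|$. This produces three terms, namely $[\mathcal{D}_l(s(x)),s(y),v]_{\mathcal{L}}$, the term $(-1)^{\alpha|x|}[s(x),\mathcal{D}_l(s(y)),v]_{\mathcal{L}}$, and $(-1)^{\alpha(|x|+|y|)}[s(x),s(y),\mathcal{D}_p(v)]_{\mathcal{L}}$. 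The last of these is exactly $(-1)^{\alpha(|x|+|y|)}\Phi(x,y)(\mathcal{D}_p(v))$, which matches the second summand on the left-hand side of Eq \ref{e33} evaluated at $v$; it therefore remains only to analyse the first two terms.

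The crucial observation is that, since $\pi\mathcal{D}_l=\mathcal{D}_q\pi$ and $\pi s=1$, the element $p_x:=\mathcal{D}_l(s(x))-s(\mathcal{D}_q(x))$ satisfies $\pi(p_x)=\mathcal{D}_q(x)-\mathcal{D}_q(x)=0$, hence $p_x\in\mathcal{P}$; likewise $p_y:=\mathcal{D}_l(s(y))-s(\mathcal{D}_q(y))\in\mathcal{P}$. Substituting $\mathcal{D}_l(s(x))=s(\mathcal{D}_q(x))+p_x$ splits the first term as $[s(\mathcal{D}_q(x)),s(y),v]_{\mathcal{L}}+[p_x,s(y),v]_{\mathcal{L}}=\Phi(\mathcal{D}_q(x),y)(v)+[p_x,s(y),v]_{\mathcal{L}}$, and similarly the second term yields $(-1)^{\alpha|x|}\Phi(x,\mathcal{D}_q(y))(v)$ together with a correction $(-1)^{\alpha|x|}[s(x),p_y,v]_{\mathcal{L}}$. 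Here is where the abelian hypothesis enters: since $p_x,v\in\mathcal{P}$, the skew-supersymmetry of Definition \ref{d21} lets me permute arguments so that the two $\mathcal{P}$-valued slots become adjacent, whence $[p_x,s(y),v]_{\mathcal{L}}$ and $[s(x),p_y,v]_{\mathcal{L}}$ each equal, up to a Koszul sign, a bracket lying in $[\mathcal{P},\mathcal{P},\mathcal{L}]=0$ and so vanish.

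Collecting the surviving terms gives $\mathcal{D}_p(\Phi(x,y)(v))=\Phi(\mathcal{D}_q(x),y)(v)+(-1)^{\alpha|x|}\Phi(x,\mathcal{D}_q(y))(v)+(-1)^{\alpha(|x|+|y|)}\Phi(x,y)(\mathcal{D}_p(v))$, and since $v\in\mathcal{P}$ is arbitrary this is precisely Eq \ref{e33}, so $(\mathcal{D}_p,\mathcal{D}_q)$ is compatible with respect to $\Phi$. The argument is short, and the only genuine subtlety—the main obstacle—is the vanishing of the two correction terms: one must confirm that the skew-supersymmetry rearrangement really brings both $\mathcal{P}$-arguments into adjacent positions so that $[\mathcal{P},\mathcal{P},\mathcal{L}]=0$ applies, and check that the accompanying signs cancel. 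Everything else is a direct use of the derivation rule for $\mathcal{D}_l$ together with the two commutativity relations supplied by extensibility.
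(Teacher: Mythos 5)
Your proposal is correct and is essentially the paper's own proof: your $p_x$ is exactly the paper's map $\mu(x)=\mathcal{D}_l s(x)-s(\mathcal{D}_q(x))$, and both arguments expand $\mathcal{D}_l([s(x),s(y),v]_{\mathcal{L}})$ by the superderivation rule, use the commuting squares, and kill the correction terms $[\mu(x),s(y),v]_{\mathcal{L}}$ and $[s(x),\mu(y),v]_{\mathcal{L}}$ via $[\mathcal{P},\mathcal{P},\mathcal{L}]=0$. The only "subtlety" you flag is in fact immediate: once skew-supersymmetry places the two $\mathcal{P}$-entries together, the bracket is zero, and any Koszul sign multiplying zero is still zero, so no sign bookkeeping is needed.
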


\begin{proof}
Since $(\mathcal{D}_p,\mathcal{D}_q)$ is extensible, there exists a superderivation $\mathcal{D}_l\in Der(\mathcal{L})$ such that $i\mathcal{D}_p=\mathcal{D}_li,~\pi \mathcal{D}_l=\mathcal{D}_q\pi$, and $|\mathcal{D}_p|=|\mathcal{D}_q|=|\mathcal{D}_l|=\alpha$. Then 
$$\mathcal{D}_ls(x)-s(\mathcal{D}_q(x))\in \mathcal{P}~ {\rm{for}} ~x\in \mathcal{Q}.$$ So there is an even linear map $\mu:\mathcal{Q}\rightarrow \mathcal{P}$ given by 
\begin{equation}\label{e43}
	\mu(x)=\mathcal{D}_ls(x)-s(\mathcal{D}_q(x)).
\end{equation}
Since $[\mathcal{P},\mathcal{P},\mathcal{L}]=0$, we have 
\begin{equation*}
	[\mu(x),s(y),v]_\mathcal{L}=[s(x),\mu(y),v]_\mathcal{L}=0,
\end{equation*}
for $x,y\in \mathcal{Q}$ and $v\in \mathcal{P}$. Since $i\mathcal{D}_p=\mathcal{D}_l i$ and $\mathcal{D}_l\in Der(\mathcal{L})$, we get
\begin{equation*}
	\begin{split}
		&\mathcal{D}_p(\Phi(x,y)(v))-(-1)^{\alpha(|x|+|y|)}\Phi(x,y)(\mathcal{D}_p(v))\\
		&=\mathcal{D}_p([s(x),s(y),v]_\mathcal{L})-(-1)^{\alpha(|x|+|y|)}[s(x),s(y),\mathcal{D}_p(v)]_\mathcal{L}\\
		&=\mathcal{D}_l([s(x),s(y),v]_\mathcal{L})-(-1)^{\alpha(|x|+|y|)}[s(x),s(y),\mathcal{D}_p(v)]_\mathcal{L}\\
		&=[\mathcal{D}_l(s(x)),s(y),v]_\mathcal{L}+(-1)^{\alpha|x|}[s(x),\mathcal{D}_l(s(y)),v]_\mathcal{L}+(-1)^{\alpha(|x|+|y|)}[s(x),s(y),\mathcal{D}_l(v)]_\mathcal{L}\\
		&~~~~~~-(-1)^{\alpha(|x|+|y|)}[s(x),s(y),\mathcal{D}_p(v)]_\mathcal{L}\\
		&=[s(\mathcal{D}_q(x)),s(y),v]_\mathcal{L}+[\mu(x),s(y),v]_\mathcal{L}+(-1)^{\alpha|x|}([s(x),s(\mathcal{D}_q(y)),v]_\mathcal{L}\\
		&~~~~~~+[s(x),\mu(y),v]_\mathcal{L})+(-1)^{\alpha(|x|+|y|)}[s(x),s(y),\mathcal{D}_l(v)]_\mathcal{L}-(-1)^{\alpha(|x|+|y|)}[s(x),s(y),\mathcal{D}_p(v)]_\mathcal{L}\\
		&=[s(\mathcal{D}_q(x)),s(y),v]_\mathcal{L}+(-1)^{\alpha|x|}([s(x),s(\mathcal{D}_q(y)),v]_\mathcal{L}\\
		&=\Phi(\mathcal{D}_q(x),y)(v)+(-1)^{\alpha|x|}\Phi(x,\mathcal{D}_q(y))(v).
	\end{split}
\end{equation*}
\end{proof}

\begin{proposition}\label{p43}
	Let $0\rightarrow \mathcal{P}\hookrightarrow \mathcal{L}\rightarrow \mathcal{Q}\rightarrow 0$ be an extension of $3$-Lie superalgebras with $[\mathcal{P},\mathcal{P},\mathcal{L}]=0$. Assume that  $(\mathcal{D}_p,\mathcal{D}_q)\in Der(\mathcal{P}_{\overline{0}})\times Der(\mathcal{Q}_{\overline{0}})$ is compatible with respect to $\Phi$ given by Eq \ref{e27}. Then $(\mathcal{D}_p,\mathcal{D}_q)_{\overline{0}}$ is extensible if and only if $[Ob^\mathcal{L}_{(\mathcal{D}_p,\mathcal{D}_q)}]\in \mathbb{H}^1(\mathcal{Q};\mathcal{P})$ is trivial.
\end{proposition}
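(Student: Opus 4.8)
The plan is to prove the two implications separately, using throughout that $\mathcal{D}_p,\mathcal{D}_q$ are even, so that every sign $(-1)^{|\alpha|\cdots}$ in Eq \ref{e32} and in the compatibility condition Eq \ref{e33} collapses to $1$. Here $Ob^{\mathcal{L}}_{(\mathcal{D}_p,\mathcal{D}_q)}$ is understood as $Ob^{\Omega}_{(\mathcal{D}_p,\mathcal{D}_q)}$ for the $2$-cocycle $\Omega$ of Eq \ref{e29} attached to a fixed section $s$, whose class in $\mathbb{H}^1(\mathcal{Q};\mathcal{P})$ is independent of $s$ by Lemma \ref{l41}.

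For the forward implication, assume $(\mathcal{D}_p,\mathcal{D}_q)$ is extensible, with $\mathcal{D}_l\in Der(\mathcal{L})$ and $\mu(x)=\mathcal{D}_ls(x)-s(\mathcal{D}_q(x))\in\mathcal{P}$ as in Eq \ref{e43}. Since $i\mathcal{D}_p=\mathcal{D}_li$, I would compute $\mathcal{D}_p(\Omega(x,y,z))=\mathcal{D}_l(\Omega(x,y,z))$ by applying the superderivation property of $\mathcal{D}_l$ to $\Omega(x,y,z)=[s(x),s(y),s(z)]_\mathcal{L}-s([x,y,z]_\mathcal{Q})$ and substituting $\mathcal{D}_ls(w)=s(\mathcal{D}_q(w))+\mu(w)$ in each slot. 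The terms carrying $s(\mathcal{D}_q(\cdot))$ recombine, via $[s(a),s(b),s(c)]_\mathcal{L}=\Omega(a,b,c)+s([a,b,c]_\mathcal{Q})$ and the fact that $\mathcal{D}_q$ is a derivation, into $\Omega(\mathcal{D}_q(x),y,z)+\Omega(x,\mathcal{D}_q(y),z)+\Omega(x,y,\mathcal{D}_q(z))$, the $s$-valued parts cancelling exactly; the terms carrying $\mu$, after moving the $\mathcal{P}$-entry into the third slot by skew-supersymmetry so that each can be read through $\Phi$, assemble into $(\delta_\Phi\mu)(x,y,z)+\mu([x,y,z]_\mathcal{Q})$. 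Comparing with Eq \ref{e32} yields $Ob^{\mathcal{L}}_{(\mathcal{D}_p,\mathcal{D}_q)}=\delta_\Phi\mu$, a coboundary, so its class is trivial.

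For the converse, triviality gives an even $\mu\in(\mathbb{C}^0(\mathcal{Q};\mathcal{P}))_{\overline{0}}$ with $Ob^{\mathcal{L}}_{(\mathcal{D}_p,\mathcal{D}_q)}=\delta_\Phi\mu$. Using the vector-space splitting $\mathcal{L}=s(\mathcal{Q})\oplus\mathcal{P}$ afforded by the section, I would define an even linear map $\mathcal{D}_l:\mathcal{L}\to\mathcal{L}$ by $\mathcal{D}_l(s(x)+v)=s(\mathcal{D}_q(x))+\mu(x)+\mathcal{D}_p(v)$. Commutativity of the diagram is immediate: $\mathcal{D}_li=i\mathcal{D}_p$ on taking $x=0$, and $\pi\mathcal{D}_l=\mathcal{D}_q\pi$ because $\mu(x),\mathcal{D}_p(v)\in\ker\pi$ while $\pi s=1$. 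The one remaining point, and the crux of the argument, is that $\mathcal{D}_l\in Der(\mathcal{L})$.

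For that verification I would transport the bracket to $\mathcal{L}_{\Phi,\Omega}=\mathcal{Q}\oplus\mathcal{P}$ of Eq \ref{e210} (to which $\mathcal{L}$ is isomorphic via $s$), where $\mathcal{D}_l$ becomes $(x+v)\mapsto\mathcal{D}_q(x)+\mu(x)+\mathcal{D}_p(v)$, and expand both sides of the superderivation identity on arguments $x+u,\,y+v,\,z+w$. Projecting to $\mathcal{Q}$ reduces to $\mathcal{D}_q$ being a derivation of $\mathcal{Q}$. Projecting to $\mathcal{P}$ separates, by linearity in $u,v,w$, into two independent packages: the part linear in $(u,v,w)$ consists of the $\Phi$-on-$(u,v,w)$ terms, which match precisely because $(\mathcal{D}_p,\mathcal{D}_q)$ is compatible (Eq \ref{e33}); the part constant in $(u,v,w)$ assembles the $\mu$-terms into $(\delta_\Phi\mu)(x,y,z)$ and the remaining $\Omega$-terms into $Ob^{\Omega}_{(\mathcal{D}_p,\mathcal{D}_q)}(x,y,z)$ as read off from Eq \ref{e32}, so it vanishes exactly when $Ob^{\Omega}_{(\mathcal{D}_p,\mathcal{D}_q)}=\delta_\Phi\mu$, which is our hypothesis. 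I expect the main obstacle to be precisely this last bookkeeping—organizing the expansion of the fundamental identity in $\mathcal{L}_{\Phi,\Omega}$ so that the mixed terms sort cleanly into the compatibility relation and the cocycle/coboundary identity; the even-degree hypothesis is what keeps it tractable, since it kills all the sign prefactors that would otherwise accompany $\mathcal{D}_p$ and $\mathcal{D}_q$.
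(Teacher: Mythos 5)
Your proposal is correct, and it differs from the paper in a substantive way, because the paper only proves the forward implication in detail and then disposes of the converse with a bare reference to \cite{xu2018}. In the forward direction your computation is the same in essence but streamlined: the paper applies $\mathcal{D}_l$ to $[s(x_1)+v_1,s(x_2)+v_2,s(x_3)+v_3]_\mathcal{L}$ with arbitrary $v_i\in\mathcal{P}$ and must then invoke the compatibility hypothesis (Eq \ref{e33}) three times to cancel the $v_i$-dependent packages, whereas you apply $\mathcal{D}_l$ directly to $\Omega(x,y,z)=[s(x),s(y),s(z)]_\mathcal{L}-s([x,y,z]_\mathcal{Q})$ (i.e.\ the case $v_i=0$), so the $s$-valued parts cancel because $\mathcal{D}_q\in Der(\mathcal{Q})$, the $\mu$-parts assemble into $\delta_\Phi\mu$, and compatibility is never needed in that direction --- consistently, since by Proposition \ref{p42} it is automatic from extensibility. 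The genuine difference is the converse: you construct $\mathcal{D}_l(s(x)+v)=s(\mathcal{D}_q(x))+\mu(x)+\mathcal{D}_p(v)$, observe that commutativity of the diagram is immediate, and verify $\mathcal{D}_l\in Der(\mathcal{L})$ by transporting to $\mathcal{L}_{\Phi,\Omega}$ of Eq \ref{e210} and splitting the superderivation identity into its $\mathcal{Q}$-projection (which is just $\mathcal{D}_q\in Der(\mathcal{Q})$), the part of the $\mathcal{P}$-projection linear in $(u,v,w)$ (which is exactly the compatibility identity --- this is where that hypothesis is genuinely used), and the constant part (which is exactly $Ob^{\Omega}_{(\mathcal{D}_p,\mathcal{D}_q)}=\delta_\Phi\mu$, your hypothesis). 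This trichotomy is correct, is the natural analogue of Xu's 3-Lie algebra argument, and closes the step the paper leaves to the reference; what your route buys is a self-contained proof of the equivalence, at the cost of the final bookkeeping you rightly identify as the main labor, while the paper's route with general $v_i$ buys nothing beyond re-deriving Proposition \ref{p42} inside the forward direction.
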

\begin{proof}
	Suppose that $(\mathcal{D}_p,\mathcal{D}_q)$ is extensible. Then there exists a superderivation $\mathcal{D}_l\in Der(\mathcal{L})$ such that the associative diagram \ref{diagram} is commutative. Since $\pi \mathcal{D}_l=\mathcal{D}_q  \pi $ and $|\mathcal{D}_p|=|\mathcal{D}_q|=|\mathcal{D}_l|=\alpha$, we have $$\mathcal{D}_ls(x)-s(\mathcal{D}_q(x))\in \mathcal{P}~ {\rm{for}} ~x\in \mathcal{Q}.$$ So there is an even linear map $\mu:\mathcal{Q}\rightarrow \mathcal{P}$ given by Eq \ref{e43}. It is sufficient to show that 
	\begin{equation}\label{e44}
Ob^\mathcal{L}_{(\mathcal{D}_p,\mathcal{D}_q)}(x_1,x_2,x_3)=(\delta_{\Phi}(\mu))(x_1,x_2,x_3),~~~~x_i\in \mathcal{Q}.
	\end{equation}
Now
\begin{equation}\label{e45}
\begin{split}
		&\mathcal{D}_l([s(x_1)+v_1,s(x_2)+v_2,s(x_3)+v_3]_\mathcal{L})\\
	&=[\mathcal{D}_l(s(x_1)+v_1)),s(x_2)+v_2,s(x_3)+v_3]_\mathcal{L}\\
	&~~~~~~+(-1)^{\alpha|x_1|}[s(x_1)+v_1,\mathcal{D}_l(s(x_2)+v_2)),s(x_3)+v_3]_\mathcal{L}\\
	&~~~~~~+(-1)^{\alpha(|x_1|+|x_2|)}[s(x_1)+v_1,s(x_2)+v_2,\mathcal{D}_l(s(x_3)+v_3)]_\mathcal{L},
\end{split}
\end{equation}
for $x_i\in \mathcal{Q}$ and $v_i\in \mathcal{P}$. Since $[\mathcal{P},\mathcal{P},\mathcal{L}]=0$, we get
\begin{equation*}
\begin{split}
		&[s(x_1)+v_1,s(x_2)+v_2,s(x_3)+v_3]_\mathcal{L}\\
	&=[s(x_1),s(x_2),s(x_3)]_\mathcal{L}+[s(x_1),s(x_2),v_3]_\mathcal{L}+[v_1,s(x_2),s(x_3)]_\mathcal{L}+[s(x_1),v_2,s(x_3)]_\mathcal{L}\\
	&=[s(x_1),s(x_2),s(x_3)]_\mathcal{L}+\Phi(x_1,x_2)(v_3)+(-1)^{|x_1|(|x_2|+|x_3|)}\Phi(x_2,x_3)(v_1)\\
	&~~~~~~+(-1)^{|x_3|(|x_1|+|x_2|)}\Phi(x_3,x_1)(v_2),  
\end{split}
\end{equation*}
and hence the left-hand side of Eq \ref{e45} is
\begin{equation*}
	\begin{split}
		&\mathcal{D}_l([s(x_1),s(x_2),s(x_3)]_\mathcal{L}+\Phi(x_1,x_2)(v_3)+(-1)^{|x_1|(|x_2|+|x_3|)}\Phi(x_2,x_3)(v_1)\\
		&~~~~~~+(-1)^{|x_3|(|x_1|+|x_2|)}\Phi(x_3,x_1)(v_2))\\
		&=\mathcal{D}_l(s([x_1,x_2,x_3]_\mathcal{Q})+\Omega(x_1,x_2,x_3)+\Phi(x_1,x_2)(v_3)+(-1)^{|x_1|(|x_2|+|x_3|)}\Phi(x_2,x_3)(v_1)\\
		&~~~~~~+(-1)^{|x_3|(|x_1|+|x_2|)}\Phi(x_3,x_1)(v_2)).
	\end{split}
\end{equation*}
Since $\mathcal{D}_l i=i \mathcal{D}_p$ where $i$ is the inclusion map, and $\Omega(x_1,x_2,x_3)$, $\Phi(x_1,x_2)(v_3)$, $\Phi(x_2,x_3)(v_1)$, $\Phi(x_3,x_1)(v_2)\in \mathcal{P}$, by the definition of $\mu$ as in Eq \ref{e43}, it follows that
\begin{equation}\label{e46}
	\begin{split}
		&s(\mathcal{D}_q([x_1,x_2,x_3]_\mathcal{Q})+\mu([x_1,x_2,x_3]_\mathcal{Q})+\mathcal{D}_p(\Omega(x_1,x_2,x_3))+\mathcal{D}_p(\Phi(x_1,x_2)(v_3))\\
		&~~~~~~+(-1)^{|x_1|(|x_2|+|x_3|)}\mathcal{D}_p(\Phi(x_2,x_3)(v_1))+(-1)^{|x_3|(|x_1|+|x_2|)}\mathcal{D}_p(\Phi(x_3,x_1)(v_2)))\\
		&=s([\mathcal{D}_q(x_1),x_2,x_3]_\mathcal{Q})+(-1)^{\alpha|x_1|}s([x_1,\mathcal{D}_q(x_2),x_3]_\mathcal{Q})\\
		&~~~~~~+(-1)^{\alpha(|x_2|+|x_3|)}s([x_1,x_2,\mathcal{D}_q(x_3)]_\mathcal{Q})+\mu([x_1,x_2,x_3]_\mathcal{Q})+\mathcal{D}_p(\Omega(x_1,x_2,x_3))\\
		&~~~~~~+\mathcal{D}_p(\Phi(x_1,x_2)(v_3))+(-1)^{|x_1|(|x_2|+|x_3|)}\mathcal{D}_p(\Phi(x_2,x_3)(v_1))\\
		&~~~~~~+(-1)^{|x_3|(|x_1|+|x_2|)}\mathcal{D}_p(\Phi(x_3,x_1)(v_2)).
	\end{split}
\end{equation}
Now we compute the right-hand side of Eq \ref{e45}. Since $\mathcal{D}
_l|_{\mathcal{P}}=\mathcal{D}_p$, 
\begin{equation*}
	\begin{split}
		\mathcal{D}_l(s(x_i)+v_i)&=\mathcal{D}_l(s(x_i))+\mathcal{D}_p(v_i)\\
		&=\mathcal{D}_l(s(x_i))-s(\mathcal{D}_q(x_i))+s(\mathcal{D}_q(x_i))+\mathcal{D}_p(v_i)\\
		&=s(\mathcal{D}_q(x_i))+\mu(x_i)+\mathcal{D}_p(v_i)\in s(\mathcal{Q})\oplus \mathcal{P}.
	\end{split}
\end{equation*}
By this and $[\mathcal{P},\mathcal{P},\mathcal{L}]=0$, the right-hand side of Eq \ref{e45} is
\begin{equation}\label{e47}
	\begin{split}
		&[s(\mathcal{D}_q(x_1))+\mu(x_1)+\mathcal{D}_p(v_1),s(x_2)+v_2,s(x_3)+v_3]_\mathcal{L}\\
		&~~~~~~+(-1)^{|\alpha||x_1|}[s(x_1)+v_1,s(\mathcal{D}_q(x_2))+\mu(x_2)+\mathcal{D}_p(v_2),s(x_3)+v_3]_\mathcal{L}\\
		&~~~~~~+(-1)^{|\alpha|(|x_1|+|x_2|)}[s(x_1)+v_1,s(x_2)+v_2,s(\mathcal{D}_q(x_3))+\mu(x_3)+\mathcal{D}_p(v_3)]_\mathcal{L}\\
		&=[s(\mathcal{D}_q(x_1)),s(x_2),s(x_3)]_\mathcal{L}+[s(\mathcal{D}_q(x_1)),s(x_2),v_3]_\mathcal{L}+[s(\mathcal{D}_q(x_1)),v_2,s(x_3)]_\mathcal{L}\\
		&~~~~~~+[\mu(x_1),s(x_2),s(x_3)]_\mathcal{L}+[\mathcal{D}_p(v_1),s(x_2),s(x_3)]_\mathcal{L}+(-1)^{\alpha|x_1|}([s(x_1),s(\mathcal{D}_q(x_2)),s(x_3)]_\mathcal{L}\\
		&~~~~~~+[s(x_1),s(\mathcal{D}_q(x_2)),v_3]_\mathcal{L}+[s(x_1),\mu(x_2),s(x_3)]_\mathcal{L}+[s(x_1),\mathcal{D}_p(v_2),s(x_3)]_\mathcal{L}\\
		&~~~~~~+[v_1,s(\mathcal{D}_q(x_2)),s(x_3)]_\mathcal{L})+(-1)^{\alpha(|x_1|+|x_2|)}([s(x_1),s(x_2),s(\mathcal{D}_q(x_3))]_\mathcal{L}\\
		&~~~~~~+[v_1,s(x_2),s(\mathcal{D}_q(x_3))]_\mathcal{L}+[s(x_1),s(x_2),\mu(x_3)]_\mathcal{L}+[s(x_1),s(x_2),\mathcal{D}_p(v_3)]_\mathcal{L}\\
		&~~~~~~+[s(x_1),v_2,s(\mathcal{D}_q(x_3))]_\mathcal{L}).
	\end{split}
\end{equation}
By Eqs \ref{e46} and \ref{e47} it follows that
\begin{equation*}
	\begin{split}
	&s([\mathcal{D}_q(x_1),x_2,x_3]_\mathcal{Q})+(-1)^{\alpha|x_1|}s([x_1,\mathcal{D}_q(x_2),x_3]_\mathcal{Q})+(-1)^{\alpha(|x_1|+|x_2|)}s([x_1,x_2,\mathcal{D}_q(x_3)]_\mathcal{Q})\\
	&~~~~~~+\mu([x_1,x_2,x_3]_\mathcal{Q})+\mathcal{D}_p(\Omega(x_1,x_2,x_3))+\mathcal{D}_p(\Phi(x_1,x_2)(v_3))\\&~~~~~~+(-1)^{|x_1|(|x_2|+|x_3|)}\mathcal{D}_p(\Phi(x_2,x_3)(v_1))+(-1)^{|x_3|(|x_1|+|x_2|)}\mathcal{D}_p(\Phi(x_3,x_1)(v_2))\\
	&=[s(\mathcal{D}_q(x_1)),s(x_2),s(x_3)]_\mathcal{L}+\Phi(\mathcal{D}_q(x_1),x_2)(v_3)+(-1)^{|x_3|(\alpha+|x_1|+|x_2|)}\Phi(x_3,\mathcal{D}_q(x_1))(v_2)\\
	&~~~~~~~+(-1)^{|x_1|(|x_2|+|x_3|)}\Phi(x_2,x_3)(\mu(x_1))+(-1)^{(\alpha+|x_1|)(|x_2|+|x_3|)}\Phi(x_2,x_3)(\mathcal{D}_p(v_1))\\
	&~~~~~~+(-1)^{\alpha|x_1|}([s(x_1),s(\mathcal{D}_q(x_2)),s(x_3)]_\mathcal{L}+(-1)^{\alpha|x_1|}\Phi(x_1,\mathcal{D}_q(x_2))(v_3)\\
	&~~~~~~+(-1)^{\alpha(|x_1|+|x_3|)+|x_1|(|x_2|+|x_3|)}\Phi(x_3,x_1)(\mathcal{D}_p(v_2))+(-1)^{|x_3|(|x_1|+|x_2|)}\Phi(x_3,x_1)(\mu(x_2))\\
	&~~~~~~+(-1)^{|x_1|(|x_2|+|x_3|)}\Phi(\mathcal{D}_q(x_2),x_3)(v_1))+(-1)^{\alpha(|x_1|+|x_2|)}([s(x_1),s(x_2),s(\mathcal{D}_q(x_3))]_\mathcal{L}\\
	&~~~~~~+(-1)^{\alpha(|x_1|+|x_2|)}\Phi(x_1,x_2)(\mu(x_3))+(-1)^{\alpha(|x_1|+|x_2|)}\Phi(x_1,x_2)(\mathcal{D}_p(v_3))\\&~~~~~~+(-1)^{|x_3|(|x_1|+|x_2|)}\Phi(\mathcal{D}_q(x_3),x_1)(v_2)+(-1)^{\alpha(|x_1|+|x_2|)+|x_1|(\alpha+|x_2|+|x_3|)}\Phi(x_2,\mathcal{D}_q(x_3))(v_1)).
\end{split}
\end{equation*}
Then we get
\begin{equation*}
	\begin{split}
		0&=-\Omega(\mathcal{D}_q(x_1),x_2,x_3)-(-1)^{\alpha|x_1|}\Omega(x_1,\mathcal{D}_q(x_2),x_3)-(-1)^{\alpha(|x_1|+|x_2|)}\Omega(x_1,x_2,\mathcal{D}_q(x_3))\\
		&~~~~~~+\mathcal{D}_p(\Omega(x_1,x_2,x_3))+(-1)^{|x_1|(|x_2|+|x_3|)}\Phi(x_2,x_3)(\mu(x_1))+(-1)^{|x_3|(|x_1|+|x_2|)}\Phi(x_3,x_1)(\mu(x_2))\\&~~~~~~+(-1)^{\alpha(|x_1|+|x_2|)}\Phi(x_1,x_2)(\mu(x_3))+\mu([x_1,x_2,x_3])+(\mathcal{D}_p\Phi(x_1,x_2)\\&~~~~~~-(-1)^{\alpha(|x_1|+|x_2|)}\Phi(x_1,x_2)\mathcal{D}_p-\Phi(\mathcal{D}_q(x_1),x_2)-(-1)^{\alpha|x_1|}\Phi(x_1,\mathcal{D}_q(x_2)))(v_3)\\&~~~~~~+(-1)^{|x_1|(|x_2|+|x_3|)}(\mathcal{D}_p\Phi(x_2,x_3)-(-1)^{\alpha(|x_2|+|x_3|)}\Phi(x_2,x_3)\mathcal{D}_p-\Phi(\mathcal{D}_q(x_2),x_3)\\&~~~~~~-(-1)^{\alpha|x_2|}\Phi(x_2,\mathcal{D}_q(x_3)))(v_1)+(-1)^{|x_3|(|x_1|+|x_2|)}(\mathcal{D}_p\Phi(x_3,x_1)\\&~~~~~~-(-1)^{\alpha(|x_3|+|x_1|)}\Phi(x_3,x_1)\mathcal{D}_p-\Phi(\mathcal{D}_q(x_3),x_1)-(-1)^{\alpha|x_1|}\Phi(x_3,\mathcal{D}_q(x_1)))(v_2).
\end{split}
\end{equation*}
Since $(\mathcal{D}_p,\mathcal{D}_q)$ is compatible with respect to $\Phi$,

\begin{equation*}
	\begin{split}
		(\mathcal{D}_p\Phi(x_1,x_2)&-(-1)^{\alpha(|x_1|+|x_2|)}\Phi(x_1,x_2)\mathcal{D}_p-\Phi(\mathcal{D}_q(x_1),x_2)\\&-(-1)^{\alpha|x_1|}\Phi(x_1,\mathcal{D}_q(x_2)))(v_3)=0,		
	\end{split}
\end{equation*}

\begin{equation*}
	\begin{split}
		 (\mathcal{D}_p\Phi(x_2,x_3)&-(-1)^{\alpha(|x_2|+|x_3|)}\Phi(x_2,x_3)\mathcal{D}_p-\Phi(\mathcal{D}_q(x_2),x_3)\\&-(-1)^{\alpha|x_2|}\Phi(x_2,\mathcal{D}_q(x_3)))(v_1)=0,
	\end{split}
\end{equation*}

\begin{equation*}
	\begin{split}
	 (\mathcal{D}_p\Phi(x_3,x_1)&-(-1)^{\alpha(|x_3|+|x_1|)}\Phi(x_3,x_1)\mathcal{D}_p-\Phi(\mathcal{D}_q(x_3),x_1)\\&-(-1)^{\alpha|x_1|}\Phi(x_3,\mathcal{D}_q(x_1)))(v_2)=0.
		\end{split}
\end{equation*}
Thus we have
\begin{equation*}
	\begin{split}
		&-\Omega(\mathcal{D}_q(x_1),x_2,x_3)-(-1)^{\alpha|x_1|}\Omega(x_1,\mathcal{D}_q(x_2),x_3)-(-1)^{\alpha(|x_1|+|x_2|)}\Omega(x_1,x_2,\mathcal{D}_q(x_3))\\
		&+\mathcal{D}_p(\Omega(x_1,x_2,x_3))+(-1)^{|x_1|(|x_2|+|x_3|)}\Phi(x_2,x_3)(\mu(x_1))+(-1)^{|x_3|(|x_1|+|x_2|)}\Phi(x_3,x_1)(\mu(x_2))\\&+(-1)^{\alpha(|x_1|+|x_2|)}\Phi(x_1,x_2)(\mu(x_3))+\mu([x_1,x_2,x_3])=0,
	\end{split}
\end{equation*}
since $|\mathcal{D}_p|=|\mathcal{D}_q|=|\mathcal{D}_l|=|\alpha|=0$, hence we have $Ob^\mathcal{L}_{(\mathcal{D}_p,\mathcal{D}_q)}(x_1,x_2,x_3)=(\delta_{\Phi}(\mu))(x_1,x_2,x_3)$ due to Eqs \ref{e32} and \ref{e25}.\\
To prove the converse part refer \cite{xu2018}.
\end{proof}

\end{document}